\let\chapter\undefined
\def\NZQ{\mathbb}               
\def\ZZ{{\NZQ Z}}
\def\RR{{\NZQ R}}
\def\EE{{\mathbb E}}
\def\frk{\mathfrak}               
\def\aa{{\frk a}}
\def\mm{{\frk m}}
\def\Phi{{\frk n}}
\def\Phi{{\frk N}}
\def\Sf{{\frk S}}
\def\Tf{{\mathfrak T}}
\def\MM{{\mathbf M}}
\def\OO{{\mathbf O}}
\def\Rb{{\mathbf R}}
\def\Sb{{\mathbf S}}
\def\vb{{\mathbf v}}
\def\xb{{\mathbf x}}
\def\yb{{\mathbf y}}
\def\zb{{\mathbf z}}
\def\pb{{\mathbf p}}
\def\cb{{\mathbf c}}
\def\mb{{\mathbf m}}
\def\sb{{\mathbf s}}
\def\rb{{\mathbf r}}
\def\B{{\mathcal B}}
\def\Oc{{\mathcal O}}
\def\A{{\mathcal A}}
\def\MB{{\mathcal B}}
\def\M{{\mathcal M}}
\def\H{{\mathcal H}}
\def\Tc{{\mathcal T}}
\def\Sc{{\mathcal S}}
\def\P{{\mathcal P}}
\def\C{{\mathcal C}}
\def\js{{\mathsf j}}
\def\MB{{\mathcal B}}
\def\Sc{{\mathcal S}}
\def\M{{\mathcal M}}
\def\xb{{\mathbf x}}
\def\yb{{\mathbf y}}
\def\zb{{\mathbf z}}
\def\js{{\mathsf j}}
\def\opn#1#2{\def#1{\operatorname{#2}}} 
\opn\chara{char} \opn\length{\ell} \opn\pd{pd} \opn\rk{rk}
\opn\projdim{proj\,dim} \opn\injdim{inj\,dim} \opn\rank{rank}
\opn\depth{depth} \opn\grade{grade} \opn\height{height}
\opn\embdim{emb\,dim} \opn\codim{codim}
\def\OO{{\mathcal O}}
\opn\Tr{Tr} \opn\bigrank{big\,rank}
\opn\superheight{superheight}\opn\lcm{lcm}
\opn\trdeg{tr\,deg}
\opn\reg{reg} \opn\lreg{lreg} \opn\ini{in} \opn\lpd{lpd}
\opn\size{size} \opn\sdepth{sdepth}
\opn\link{link}\opn\fdepth{fdepth}\opn\lex{lex}
\opn\LM{LM}
\opn\LC{LC}
\opn\NF{NF}
\opn\Merge{Merge}
\opn\sgn{sgn}
\opn\div{div} \opn\Div{Div} \opn\cl{cl} \opn\Pic{Pic}
\opn\Prin{Prin}
\opn\op{op}
\opn\indeg{indeg} \opn\outdeg{outdeg}
\opn\red{red}
\opn\Spec{Spec} \opn\Supp{Supp} \opn\supp{supp} \opn\Sing{Sing}
\opn\Ass{Ass} \opn\Min{Min}\opn\Mon{Mon}
\opn\Ann{Ann} \opn\Rad{Rad} \opn\Soc{Soc}
 \opn\Ker{Ker} \opn\Coker{Coker} \opn\Am{Am}
\opn\Hom{Hom} \opn\Tor{Tor} \opn\Ext{Ext} \opn\End{End}
\opn\Aut{Aut} \opn\id{id}
\opn\nat{nat}
\opn\pff{pf}
\opn\Pf{Pf} \opn\GL{GL} \opn\SL{SL} \opn\mod{mod} \opn\ord{ord}
\opn\Gin{Gin} \opn\Hilb{Hilb}\opn\sort{sort}
\opn\span{span}
\opn\Image{Image}
\opn\aff{aff} \opn\con{conv} \opn\relint{relint} \opn\st{st}
\opn\lk{lk} \opn\cn{cn} \opn\core{core} \opn\vol{vol}
\opn\link{link} \opn\star{star}\opn\lex{lex}\opn\set{set}
\opn\dist{dist}
\opn\gr{gr}
\def\pot#1#2{#1[\kern-0.28ex[#2]\kern-0.28ex]}
\opn\dirlim{\underrightarrow{\lim}}
\opn\inivlim{\underleftarrow{\lim}}
\let\tensor=\otimes
\let\to=\rightarrow
\def\Implies{\ifmmode\Longrightarrow \else
        \unskip${}\Longrightarrow{}$\ignorespaces\fi}
\def\implies{\ifmmode\Rightarrow \else
        \unskip${}\Rightarrow{}$\ignorespaces\fi}
\def\iff{\ifmmode\Longleftrightarrow \else
        \unskip${}\Longleftrightarrow{}$\ignorespaces\fi}
\newtheorem{Theorem}{Theorem}[section]
\newtheorem{Lemma}[Theorem]{Lemma}
\newtheorem{Corollary}[Theorem]{Corollary}
\newtheorem{Proposition}[Theorem]{Proposition}
\theoremstyle{remark}
\newtheorem{Remark}[Theorem]{Remark}
\theoremstyle{definition}
\newtheorem{Example}[Theorem]{Example}
\newtheorem{Definition}[Theorem]{Definition}
\newtheorem*{Notation}{Notation}
\let\kappa=\varkappa
\def\qed{\ifhmode\textqed\fi
      \ifmmode\ifinner\quad\qedsymbol\else\dispqed\fi\fi}
\def\textqed{\unskip\nobreak\penalty50
       \hskip2em\hbox{}\nobreak\hfil\qedsymbol
       \parfillskip=0pt \finalhyphendemerits=0}
\def\dispqed{\rlap{\qquad\qedsymbol}}
\opn\dis{dis}
\def\pnt{{\raise0.5mm\hbox{\large\bf.}}}
\opn\Lex{Lex}
\opn\syz{{\rm syz}}
\opn\spoly{{\rm spoly}}
\opn\LM{{\rm LM}}
\opn\lm{{\rm lm}}
\opn\lcm{{\rm lcm}} \opn\A{\mathcal A}
\numberwithin{equation}{section}
\tikzstyle{Cwhite}=[scale = .8,circle, fill = white, minimum size=3mm] 
\tikzstyle{Cgray}=[draw=black, scale = .3,circle, fill = white, minimum size=3mm]
\tikzstyle{Cblack2}=[scale = .4,circle, fill = black, minimum size=3mm] 
\tikzstyle{Cblack}=[scale = .7,circle, fill = black, minimum size=3mm]
\tikzstyle{C0}=[scale = .9,circle, fill = black!0, inner sep = 0pt, minimum size=3mm]
\tikzstyle{C1}=[scale = .7,circle, fill = black!0, inner sep = 0pt, minimum size=3mm]
\tikzstyle{Cred}=[scale = .4,circle, fill = red, minimum size=3mm] 
\tikzstyle{Cblue}=[scale = .4,circle, fill =blue, minimum size=3mm] 
\begin{document}

\title{divisors on graphs, orientations, syzygies, \protect\\ and system reliability}

\subjclass[2010]{05C40, 13D02, 05E40, 13A02, 13P10}

\author {Fatemeh Mohammadi}

\address{Institut f\"ur Mathematik, Technische Universit\"at Berlin, 10623 Berlin, Germany }
\email{\tt fatemeh.mohammadi@math.tu-berlin.de}

\keywords{System reliability, Betti numbers, polyhedral cellular complex,  orientations.}

\begin{abstract}
We study various ideals arising in the theory of system reliability. We use ideas from  the theory of divisors, orientations and matroids on graphs to describe the minimal polyhedral cellular  resolutions of these ideals. In each case we give an explicit combinatorial description of the minimal generating set for each higher syzygy module
in terms of the acyclic orientations of the graph, 
the reduced divisors and the bounded regions of the graphic hyperplane arrangement. 
The resolutions of all these ideals are closely related, and their Betti numbers are independent of the characteristic of the base field. We apply these results to compute the reliability of their associated systems.
\end{abstract}


\maketitle




\section{Introduction}\label{sec:intro}



This work is concerned with the development of new connections between the theory of oriented matroids, the theory of divisors on graphs,  and the theory of system reliability. 
Inspired by the work of 
Naiman-Wynn \cite{naiman} and Giglio-Wynn \cite{giglio2004monomial} 
connecting  system reliability to  Hilbert functions of their associated ideals, we study reliability of  
networks through the lens of algebraic statistics and  geometric combinatorics.  
Our main contribution is to apply the {\em syzygy tool} from computational algebra to distinguish the (non-cancelling) terms in the reliability formula for various systems. This gives a more clear insight into the structure of each such system.

\medskip

The starting point of this paper is to study the following network flow reliability problem. Let $G=(V,E)$ be a graph. 
Assume that the vertices are reliable but each edge may fail (with the probability $1-p_e$).  A popular game in system reliability theory is to  compute the probability of the union of certain events under various restrictions. 
The classical method to compute the system reliability is to apply the inclusion-exclusion principle of probability theory which is computationally expensive. On the other hand, the system reliability formula is equal to the numerator of  Hilbert series of  a certain ideal associated to the network.  The special networks have been studied in \cite{giglio2004monomial}, and the general case was stated as an open problem. We recommend \cite[Sec.\,6]{Dohmen} and  the survey articles \cite{agrawal1984survey} by Agrawal-Barlow,  and \cite{johnson1988survey}  by  Johnson-Malek for an overview of the subject.

\vspace{-1mm}
\subsection{Source-to-terminal (ST) system}\label{subsec:ST}
In this setting we denote the system by  $(G,s,t)$ when we fix two vertices of graph  $s$ (source)  and $t$ (target), and we study the probability that there exists at least one (oriented) path from $s$ to $t$. 
The network fails to communicate $s$ and $t$ whenever there is a set of failing (removed) edges such that there is no path connecting $s$ and $t$ using only the remaining edges. Such a set of edges is called a {\em cut} in this context. On the contrary, a {\em path} is a set of working edges that connect $s$ and $t$. We say that the network is working whenever there is a path of working edges between $s$ and $t$. In the algebraic approach we associate a variable $x_e$ to each edge $e$ of $G$. We consider the polynomial ring $R=k[x_e:e\in E]$ over a field $k$. To a set of edges we associate the product of their corresponding variables. The ideal generated by the monomials associated to the minimal paths (respectively minimal cuts) between $s$ and $t$ is called a path ideal $\P_{s,t}$ (respectively the cut ideal $\C_{s,t}$). The evaluation of the numerator of the Hilbert series of either the cut ideal or the path ideal of $G$ using the probabilities of failure or function of each edge, gives us the ST reliability of the network. 

\vspace{-1mm}
\subsubsection{Polyhedral cellular free resolutions}
Let $I \subset R$ be an ideal generated by monomials 
$I = \langle m_1, m_2, \dots, m_\ell \rangle$.
A graded free resolution of $I$ is an exact sequence of the form 
\vspace{-2mm}
\[
\mathcal{F} \, : 0 \rightarrow \cdots \rightarrow F_{i} \xrightarrow{\varphi_{i}} F_{i-1} \rightarrow \cdots \rightarrow F_0 \xrightarrow{\varphi_{0}} I \rightarrow 0
\]
where all $F_i$'s are free $R$-modules and all differential maps $\varphi_i$'s are graded. The resolution is called {\em minimal free resolution} (MFR) if $\varphi_{i+1}(F_{i+1}) \subseteq \mm F_i$ for all $i \geq 0$, where $\mm=\langle x_e:\ e\in E\rangle$.  The $i$-th {\em Betti number} $\beta_{i}(I)$ of $I$ is the rank of $F_i$. The $i$-th {\em graded Betti number} in degree $\js \in \mathbb{Z}^m$, 
denoted by $\beta_{i,\js}(I)$, is the rank of the degree $\js$ part of $F_i$. These integers encode very subtle numerical information  about the ideal (e.g. its Hilbert series). For a system ideal $I$, we express the numerator of its multigraded Hilbert function as
\vspace{-1mm}
\[
1-\sum_{i=1}^d (-1)^{i+1}(\sum_{\js \in {\mathbb{N}^n}} \beta_{i,\js} x^{\js})=1-\mathcal{R}_I(x)\ ,
\]
and we call the polynomial $\mathcal{R}_I(x)$  the {\em reliability polynomial} of its corresponding system.  
The evaluation of the  reliability polynomial  
in $p_e$'s  gives us the probability that the system works and
its evaluation 
in $1-p_e$'s gives us the probability that the system fails. 


One natural way to describe a resolution of an ideal is through the construction of a polyhedral complex whose faces
are labeled by monomials in such a way that the chain complex determining its cellular homology realizes a graded free resolution of the ideal.   The study of cellular resolutions was initiated by Bayer-Sturmfels in \cite{BayerSturmfels}.

\begin{Theorem}\label{intro:thm1}
The reliability polynomial of the ST system $(G,s,t)$ can be read from a subcompelx  of its associated graphic hyperplane arrangement. 
There is a bijection between the set of faces of this complex and the set of partial acyclic orientations of $(G,s,t)$.
\end{Theorem}

\begin{Example}\label{exam:double}{\rm
Consider the {\em double bridge} network \cite[Exam 6.2.1]{Dohmen}  in Figure \ref{fig3:graph}(a). 
The set of cuts between $s$ and $t$ is
$\{1258, 24568,2346,123, 1478, 678, 3567,13457\}$. 
Then 
\vspace{-2mm}
\[
\C_{s,t}=\langle x_1x_2x_3, x_2x_3x_4x_6, x_6x_7x_8, x_1x_2x_5x_8, x_1x_4x_7x_8, x_3x_5x_6x_7,
    x_2x_4x_5x_6x_8, x_1x_3x_4x_5x_7 \rangle.
\]
The Betti table of $\C_{s,t}$ is depicted in Figure~\ref{fig3:graph}(b) where its $(i,j)$-entry is simply $\beta_{i,i+j}$. 
The inclusion-exclusion expression of the reliability formula contains $2^8-1=255$ terms (with only $43$  non-cancelling terms). 
By Theorem~\ref{thm:DG} these non-cancelling terms are corresponding to $\sum \beta_i(\C_{s,t})=43$ partial acyclic orientations of $G$. 
The polyhedral complex $\mathcal{D}_G^{s,t}$  in Figure~\ref{fig:poly} supports the minimal free resolution of $\C_{s,t}$.
Its four  facets  are corresponding to the acyclic orientations of $G$ (depicted in Figure~\ref{fig3:graph}(c)).}
Thus for $p_e=p$ we have
\vspace{-1mm}
\[\mathcal{R}_{\C_{s,t}}(p)=(2p^3+4p^4+2p^5)-(4p^5+13p^6)+14p^7-4p^8.\]
\end{Example}
\vspace{.96mm}

\begin{figure}[h]

\begin{center}
\begin{tikzpicture} [scale = .08, very thick = 10mm]

  \node (n4) at (0,-3)  [Cgray] {};
  \node (n1) at (0,15) [Cgray] {};
  \node (n2) at (-11,6)  [Cred] {};

\node (n10) at (-15,6)  [Cwhite] {$s$};
  \node (n11) at (13,6)  [Cwhite] {$t$};

  \node (n3) at (11,6)  [Cblue] {};
\node (n5) at (0,6)  [Cgray] {};
  \foreach \from/\to in {n4/n2,n1/n3}
    \draw[] (\from) -- (\to);
\foreach \from/\to in {n2/n1,n4/n3,n5/n2,n5/n3,n5/n1,n5/n4}
    \draw[] (\from) -- (\to);

 \node (m4) at (-5,8)  [Cwhite] {$2$};  \node (m5) at (5,8)  [Cwhite] {$7$};
   \node (m1) at (-5.5,13) [Cwhite] {$1$};
 \node (m1) at (1,10) [Cwhite] {$4$}; \node (m1) at (1,1) [Cwhite] {$5$};
  \node (m1) at (6,13) [Cwhite] {$6$};
 \node (m1) at (6,0) [Cwhite] {$8$}; 
\node (m1) at (-6,0) [Cwhite] {$3$};
\foreach \from/\to in {n2/n1,n2/n5,n2/n4, n4/n3,n4/n5,n5/n1,n5/n3,n1/n3}
\draw[black][] (\from) -- (\to);


  \node (n4) at (94,7)  [Cgray] {};
  \node (n1) at (94,25) [Cgray] {};
  \node (n2) at (83,16)  [Cred] {};

  \node (n3) at (105,16)  [Cblue] {};
\node (n5) at (94,16)  [Cgray] {};
  \foreach \from/\to in {n4/n2,n1/n3}
   \foreach \from/\to in {n2/n5,n4/n3,n2/n1,n2/n4,n1/n3,n5/n3}
    \draw[->] (\from) -- (\to);
\foreach \from/\to in {n1/n5,n4/n5}
    \draw[blue][->] (\from) -- (\to);


  \node (n4) at (94,-15)  [Cgray] {};
  \node (n1) at (94,3) [Cgray] {};
  \node (n2) at (83,-6)  [Cred] {};

  \node (n3) at (105,-6)  [Cblue] {};
\node (n5) at (94,-6)  [Cgray] {};
  \foreach \from/\to in {n2/n5,n4/n3,n2/n1,n2/n4,n1/n3,n5/n3}
    \draw[->] (\from) -- (\to);
\foreach \from/\to in {n5/n1,n5/n4}
    \draw[blue][->] (\from) -- (\to);




  \node (n4) at (124,7)  [Cgray] {};
  \node (n1) at (124,25) [Cgray] {};
  \node (n2) at (113,16)  [Cred] {};

  \node (n3) at (135,16)  [Cblue] {};
\node (n5) at (124,16)  [Cgray] {};
  \foreach \from/\to in {n2/n5,n4/n3,n2/n1,n2/n4,n1/n3,n5/n3}
    \draw[->] (\from) -- (\to);
\foreach \from/\to in {n1/n5,n5/n4}
    \draw[blue][->] (\from) -- (\to);


  \node (n4) at (124,-15)  [Cgray] {};
  \node (n1) at (124,3) [Cgray] {};
  \node (n2) at (113,-6)  [Cred] {};

  \node (n3) at (135,-6)  [Cblue] {};
\node (n5) at (124,-6)  [Cgray] {};
  \foreach \from/\to in {n2/n5,n4/n3,n2/n1,n2/n4,n1/n3,n5/n3}
    \draw[->] (\from) -- (\to);
\foreach \from/\to in {n5/n1,n4/n5}
    \draw[blue][->] (\from) -- (\to);

\node[draw,align=left] at (47,7) {$\begin{matrix}
      &0&1&2&3&4\\ \text{total:}&1&8&17&14&4\\\text{0:}&1&$-$&$-$&$-$&$-$\\\text{1:}&$-$&
       $-$&$-$&$-$&$-$\\\text{2:}&$-$&2&$-$&$-$&$-$\\\text{3:}&$-$&4&4&$-$&$-$\\\text{4:}&$-$&2&13&14&4\\\end{matrix}
$};

\node (n20) at (4,-19.7)  [Cwhite] {(a)};
\node (n20) at (48,-19.7)  [Cwhite] {(b)};
\node (n20) at (110,-19.7)  [Cwhite] {(c)};
\end{tikzpicture}
\caption{(a) The double bridge network \ (b) The Betti table of $\C_{s,t}$  \ \ \ \quad (c) The acyclic orientations corersponding to the last Betti number of $\C_{s,t}$}
\label{fig3:graph}
\end{center}

\end{figure}
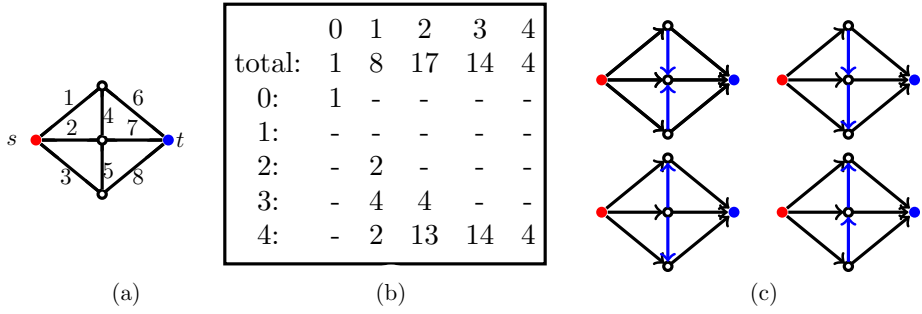

\vspace {-.7cm}

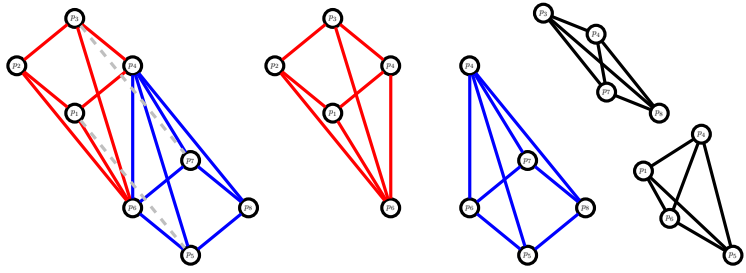
\begin{figure}[h]

\begin{center}

\begin{tikzpicture} [scale = .07, very thick = 10mm]

  \node (n1) at (25,-3)  [Cgray] {$p_1$};
  \node (n3) at (25,15) [Cgray] {$p_3$};
  \node (n2) at (14,6)  [Cgray] {$p_2$};
  \node (n4) at (36,6)  [Cgray] {$p_4$};

  \node (m1) at (47,-30)  [Cgray] {$p_5$};
  \node (m3) at (47,-12) [Cgray] {$p_7$};
  \node (m2) at (36,-21)  [Cgray] {$p_6$};
  \node (m4) at (58,-21)  [Cgray] {$p_8$};

 \foreach \from/\to in {n2/n1,n2/n3,n3/n4, n4/n1, n1/m2,n2/m2,n3/m2}
 \draw[red][] (\from) -- (\to);

 \foreach \from/\to in {n4/m1,n4/m3,n4/m4,m2/m1,m2/m3,m3/m4, m4/m1}
 \draw[blue][] (\from) -- (\to);

\foreach \from/\to in {n4/m2}
 \draw[blue][] (\from) -- (\to);

\foreach \from/\to in {n1/m1}
\draw[lightgray][dashed] (\from) -- (\to);

\foreach \from/\to in {n3/m3}
\draw[lightgray][dashed]  (\from) -- (\to);



  \node (n1) at (74,-3)  [Cgray] {$p_1$};
  \node (n3) at (74,15) [Cgray] {$p_3$};
  \node (n2) at (63,6)  [Cgray] {$p_2$};
  \node (n4) at (85,6)  [Cgray] {$p_4$};
  \node (n5) at (85,-21)  [Cgray] {$p_6$};

  \node (m4) at (100,6)  [Cgray] {$p_4$};
  \node (m1) at (111,-30)  [Cgray] {$p_5$};
  \node (m3) at (111,-12) [Cgray] {$p_7$};
  \node (m2) at (100,-21)  [Cgray] {$p_6$};
  \node (m5) at (122,-21)  [Cgray] {$p_8$};

 \foreach \from/\to in {n2/n1,n2/n3,n3/n4, n4/n1, n1/n5,n2/n5,n3/n5, n4/n5}
 \draw[red][] (\from) -- (\to);

 \foreach \from/\to in {m5/m1,m5/m3,m5/m4,m2/m1,m2/m3,m3/m4, m4/m1,m2/m4}
 \draw[blue][] (\from) -- (\to);


  \node (m3) at (126,1)  [Cgray] {$p_7$};
  \node (n3) at (124,12) [Cgray] {$p_4$};
  \node (n4) at (114,16)  [Cgray] {$p_3$};
  \node (m4) at (136,-3)  [Cgray] {$p_8$};

\foreach \from/\to in {n3/n4,m3/m4,n3/m3,n3/m4,n4/m3,n4/m4}
\draw[black][]  (\from) -- (\to);


  \node (m3) at (138,-23)  [Cgray] {$p_6$};
  \node (n3) at (144,-7) [Cgray] {$p_4$};
  \node (n4) at (133,-14)  [Cgray] {$p_1$};
  \node (m4) at (150,-30)  [Cgray] {$p_5$};

\foreach \from/\to in {n3/n4,m3/m4,n3/m3,n3/m4,n4/m3,n4/m4}
\draw[black][]  (\from) -- (\to);


\end{tikzpicture}
\caption{The polyhedral cell complex $\mathcal{D}_G^{s,t}$ resolving the MFR 
 of $\C_{s,t}$}
\label{fig:poly}
\end{center}

\end{figure}

\vspace{3mm}

\subsection{Source-to-multiple-terminal (SMT) system}\label{subsec:SMT}
Another well-known example in the theory of system reliability is the SMT system. 
The set-up is similar to ST configuration. We fix a pointed graph $(G,s)$ with the edge set $E(G)$, and the oriented edge set $\EE(G)$. We study the probability that there exists at least one (oriented) path from $s$ to every other
vertex of $G$.  We let 
$R=k[\xb]$ be the polynomial ring in the variables $\{x_e: e \in E(G)\}$ and 
$S=k[\yb]$ be the polynomial ring in the variables $\{y_e: e \in \EE(G)\}$.
The ideal corresponding to the SMT system is the spanning tree ideal of $G$.
For each spanning tree $T$ of $G$, let $\Oc_T$ denote the orientation of $T$ with a unique source at $s$ (i.e. the orientation obtained by orienting all paths away from $s$), see Figure~\ref{fig:spanning trees1}. 
Any spanning tree $T$ of $G$ gives rise to two monomials
$\yb^T=\prod_{e\in \OO(T)} y_e$ and $\xb^T = \prod_{e \in E(T)}{x_{e}}$.
We define the {\it oriented spanning tree  ideal} $\Tf_G^s$ and the {\it spanning tree ideal} 
$\Tf_G$ as
\[
\Tf_G^s=\{\yb^T :\ T\ {\rm is\ a\ spanning\ tree\ of\ } G\rangle
\ \ {\rm and}\
\
\Tf_G=\langle \xb^T:\ T\ {\rm is\ a\ spanning\ tree\ of\ } G\rangle.
\] 
The reliability polynomial of the SMT system for a (directed) graph is defined as  
\vspace{-2mm}
\[
\mathcal{R}_{\Tf_G^q}(x)=-\sum_{i=1}^d (-1)^{i+1}(\sum_{\js \in {\mathbb{N}^n}} \beta_{i,\js} x^{\js})\ .
\]
If $G$ is an undirected graph and $p_e=p$ for all edges, then the reliability formula of the SMT system can be expressed in terms of the Tutte polynomial \footnote{ The Tutte polynomial is  defined by $T(x,y)=\sum_{A\subseteq E}(x-1)^{k(A)-k(E(G))}(y-1)^{k(A)+|A|-|V(G)|}$, where $k(A)$ is the number of connected components of the subgraph on $A$.} $T(x,y)$ of $G$  by
\vspace{-2mm}
\[
\mathcal{R}_{\Tf_G}(p) = (1-p)^{|E(G)|-|V(G)|+1} p^{|V(G)|-1}T(1,\frac{1}{1-p})\ .
\]
\vspace{-6mm}
\subsubsection{Divisors on graphs}\label{subsec:div}
Let $(G,s)$ be a finite pointed graph, 
and $\Div(G)$ be the free abelian group generated by $V(G)$. An element 
of $\Div(G)$ is a formal sum of vertices with integer coefficients and is called a {\em divisor} on $G$. 
We denote by $\M(G)$ the group of integer-valued functions on $V(G)$. The {\em Laplacian operator} $\Delta : \M(G) \to \Div(G)$ is defined by
\vspace{-2mm}
\[\Delta(f) = \sum_{v \in V(G)} \sum_{\{v,w\} \in E(G)} (f(v) - f(w)) (v) .\]
The group of {\em principal divisors} is defined as the image of the Laplacian operator and is denoted by $\Prin(G)$. There is an equivalence relation on the set of divisors. Two divisors $D_1$ and $D_2$ are called {\em linearly equivalent} if:
\begin{equation}\label{eq:equiv2}
 D_1\sim D_2 \Leftrightarrow {\rm \ there\ exists\ a \ function\ } f \ {\rm with\ } D_2=D_1-\Delta(f)
\end{equation}
The set of equivalence classes forms a finitely generated abelian group which is called the {\em Picard group} of $G$. If $G$ is connected, then the finite (torsion) part of the Picard group has cardinality equal to the number of spanning trees of $G$. We recommend the recent survey article \cite{Levine1} for a short overview of the subject.

A divisor $D$ is called $s$-reduced
if 
$(i)$ $D(v)\geq 0$ for all $v \in V(G)\backslash\{s\}$ 
$(ii)$ for every non-empty subset $A\subseteq V (G)\backslash \{s\}$, there exists $v\in A$ such that $D(v)<|\EE(A,\{v\})|$, where $\EE(A,\{v\})$ contains the edges directed from a vertex in $A$  toward $v$.

The $s$-reduced divisors play an important role in divisor theory, because each divisor has a unique equivalent divisor among $s$-reduced divisors (see, e.g., \cite{Dhar90,FarbodMatt12}). 

\begin{Theorem}\label{intro:thm2}
There is a bijection between non-cancelling terms in the reliability polynomial of the SMT system  $(G,s)$, the set of multigraded Betti numbers of its spanning tree ideal,
the set of oriented $k$-spanning trees, and  $s$-reduced divisors of degree $k-1$ with $D(s)=-1$.
\end{Theorem}


\section{Partial orientations and  divisors on graphs}\label{sec:reduced}
In this section we provide a short overview of the state-of-the-art on acyclic orientations.  
\smallskip
From now on we fix a graph $G$ and we let $n=|V(G)|$. 
Let $\EE(G)$ denote the set of oriented edges of $G$; for each edge in $E(G)$ there are two edges $e$ and $\bar{e}$ in $\EE(G)$. So we have $|\EE(G)|=2m$. 
An element $e$ of $\EE(G)$ is called an {\em oriented} edge, and $\bar{e}$ is called the {\em inverse} edge. 
We have a map 
sending an oriented edge $e$ to its head (or its terminal vertex) $e_{+}$ and its tail (or its initial vertex) $e_{-}$. Note that $\bar{e}_{+}=e_{-}$ and $\bar{e}_{-}=e_{+}$. 
An {\em orientation} of $G$ is a choice of subset $\Oc \subset \EE(G)$ such that $\EE(G)$ is the disjoint union of $\Oc$ and $\bar{\Oc}=\{\bar{e}: \ e \in \Oc \}$. An orientation is called {\em acyclic} if it contains no directed cycle.
A {\em partial orientation} of $G$ is a choice of subset $\P \subset \EE(G)$ that is strictly contained in an orientation $\Oc$ of $G$. 
A partial orientation is called {\em acyclic} if the induced orientation on the graph obtained by contracting all its components is acyclic.

\medskip

Let $\Oc$ be an orientation of $G$. A vertex $s$ is called a {\em source} for $\Oc$ if $s=e_-$ for every $e \in \Oc$ which is incident to $s$.
Let $\P$ be a partial orientation of $G$, and let $H$ be the associated connected component containing the vertex $s$. 
Then $H$ is called a {\em source} for $\P$, if $H$ corresponds to a source in the graph obtained by contracting all components of $\P$.
We define $D_{\P}$ to be the divisor associated to $\P$ with $D_{\P}(v)=\indeg_{\P}(v)-1$, where $\indeg_{\P}(v)$ denotes the number of oriented edges directed {\em to} $v$ in $\P$.

\vspace{1mm}

Given disjoint nonempty subsets $A,B$ of $V(G)$ we define 
\[
\EE(A,B) = \{e \in \EE(G): e_+ \in A ,\ e_- \in B\}.
\]
To the ordered pair $(A,B)$ we  assign the effective divisor 
$D(A,B)=\sum_{v\in A}|\EE(\{v\},B)|(v).$

\medskip

For a nonempty subset $A$ of $V(G)$, $\EE(A,A^c)$ is called a {\em cut} of $G$, and any proper subset $\C$ of $\EE(A,A^c)$ is called a {\em partial cut} of $G$.
A cut $\C=\EE(A,A^c)$ is called {\em connected} if $G[A]$ and $G[A^c]$ are connected.
For each cut $\C=\EE(A,A^c)$ of $G$, we define its {\em inverse} as $\bar{\C}=\{\bar{e}:\ e\in \C\}$. The result of  applying a cut-inverse 
operation on a 
partial orientation $\P$ and the cut $\EE(A,A^c)$, is the partial orientation $\P'$ which only inverses the edges of $\EE(A,A^c)$, but preserves the other edges of $\P$ unchanged. 
Thus the only difference between $\P$ and $\P'$ is on the edges of $\EE(A,A^c)$. In other words, 
$\P'={\EE(A^c,A)}\cup(\P\backslash \EE(A,A^c))$.
Similarly one can define the inverse of a cycle in a partial orientation $\P$. The operation inverting a cut in $\P$ is called a {\em cut reversal}, and the operation inverting a cycle is called a {\em cycle reversal} (see \cite{Gioan} for more details). 
\begin{Definition}\label{def:equiv}
Two partial orientations $\P$ and $\P'$ are called 
{\em equivalent in cut-cycle reversal system}  if there exists a sequence $\P=\P_1,\P_2,\ldots,\P_k=\P'$ of orientations such that for each $i$, $\P_{i+1}$ is obtained from $\P_i$ by inverting a cut or a cycle in
$\P_i$ (see Figure~\ref{fig:cut}).
\end{Definition}
 
\vspace{-3mm}

\begin{figure}[h!]  \begin{center}

\begin{tikzpicture}  [scale = .13, very thick = 15mm]

    
  \node (n1) at (5,11) [Cgray] {};
  \node (n2) at (1,6)  [Cgray] {};
  \node (n3) at (9,6)  [Cgray] {};
  \node (n4) at (3,1)  [Cgray] {};
  \node (n5) at (7,1)  [Cgray] {};
  \foreach \from/\to in {n1/n3,n2/n4}
    \draw[blue][->] (\from) -- (\to);
\foreach \from/\to in {n3/n4,n4/n5,n5/n3}
    \draw[->] (\from) -- (\to);
\foreach \from/\to in {n1/n2}
    \draw[] (\from) -- (\to);

 
  \node (n1) at (20,11) [Cgray] {};
  \node (n2) at (16,6)  [Cgray] {};
  \node (n3) at (24,6)  [Cgray] {};
  \node (n4) at (18,1)  [Cgray] {};
  \node (n5) at (22,1)  [Cgray] {};
    \foreach \from/\to in {n3/n4,n4/n5,n5/n3}
    \draw[->] (\from) -- (\to);
\foreach \from/\to in {n1/n2}
    \draw[] (\from) -- (\to);
\foreach \from/\to in {n1/n3,n2/n4}
    \draw[red][<-] (\from) -- (\to);

 
  \node (n1) at (35,11) [Cgray] {};
  \node (n2) at (31,6)  [Cgray] {};
  \node (n3) at (39,6)  [Cgray] {};
  \node (n4) at (33,1)  [Cgray] {};
  \node (n5) at (37,1)  [Cgray] {};
  \foreach \from/\to in {n3/n4,n4/n5,n5/n3}
    \draw[blue][->] (\from) -- (\to);
\foreach \from/\to in {n1/n2}
    \draw[] (\from) -- (\to);
\foreach \from/\to in {n1/n3,n2/n4}
    \draw[black][<-] (\from) -- (\to);

 
  \node (n1) at (50,11) [Cgray] {};
  \node (n2) at (46,6)  [Cgray] {};
  \node (n3) at (54,6)  [Cgray] {};
  \node (n4) at (48,1)  [Cgray] {};
  \node (n5) at (52,1)  [Cgray] {};
   \foreach \from/\to in {n3/n4,n4/n5,n5/n3}
    \draw[red][<-] (\from) -- (\to);
\foreach \from/\to in {n1/n2}
    \draw[] (\from) -- (\to);
\foreach \from/\to in {n1/n3,n2/n4}
    \draw[black][<-] (\from) -- (\to);

\end{tikzpicture} \end{center}

\caption{$\P_1,\P_2,\P_2,\P_3$:\ where $\P_2$ is obtained by a cut-inverse from $\P_1$ and $\P_3$ by a cycle-inverse from $\P_2$.}
\label{fig:cut}
\end{figure}
\vspace{-2mm}

The following  special  partial orientations of $G$  arise naturally in our setting.


\begin{Remark}\label{rem:un/indirect}
It is easy to find two partial orientations having identical associated divisors. For example, let $e\in \P$ and $e'\not\in{\P}$ with $e_+=e'_+$. Then for $\P'=\{e'\}\cup\P\backslash\{e\}$ we have that $D_{\P}=D_{\P'}$. Moreover, 
$\P'\cap \EE(A^c,A)\subsetneq \P\cap \EE(A^c,A)$ for any $A$, with $e'_-, e'_+\in A^c$ and $e_-\in A$. We write $\P\sim_1 \P'$ if there is a sequence of moves, taking $\P$ to $\P'$ by exchanging pair of edges in each step, as explained.
So we slightly modify Definition~\ref{def:equiv} as follows:
\begin{eqnarray}\label{eq:equiv1}
\P\sim \P' \Leftrightarrow{\rm\ there\ exists\ a\ sequence\ }\P=\P_1,\P_2,\ldots,\P_k=\P',{\rm\ where}
\end{eqnarray}
for each $i$, $\P_{i+1}$ is a partial orientation such that $\P_{i+1}\sim_1 \P_i$, or it is obtained from $\P_i$ by inverting a cut, or a cycle.
\end{Remark}
\vspace{-2mm}

\begin{Example}
Let $G$ be the following graph on the vertices $v_1,v_2,\ldots,v_5$. We fix $A=\{v_4\}$. We start from $\P$, and in each step, we substitute the red edge $e$ with the blue edge directed to $e_+$, to obtain a new orientation.
Then, as we see, their associated divisors coincide, and $|\P_1\cap\EE(A^c,A)|=2$, $|\P_2\cap\EE(A^c,A)|=1$ and $|\P_3\cap\EE(A^c,A)|=0$.

\vspace{-3mm}

\begin{figure}[h!]  \begin{center}

\begin{tikzpicture}  [scale = .13, very thick = 15mm]

 \node(p1) at (3.5, 11.5) [C0] {$v_1$};
    \node(p2) at (-0.5, 6.5) [C0] {$v_2$};
        \node(p3) at (10.5, 6.5) [C0] {$v_3$};
    \node(p4) at (1.5, 0.5) [C0] {${v_4}$};
    \node(p5) at (8.5, 0.5) [C0] {$v_5$};

    
  \node (n1) at (5,11) [Cgray] {};
  \node (n2) at (1,6)  [Cgray] {};
  \node (n3) at (9,6)  [Cgray] {};
  \node (n4) at (3,1)  [Cgray] {};
  \node (n5) at (7,1)  [Cgray] {};
  \foreach \from/\to in {n5/n4,n4/n3,n4/n2,n5/n3}
    \draw[->] (\from) -- (\to);
\foreach \from/\to in {n1/n2,n1/n3}
    \draw[] (\from) -- (\to);


 
  \node (n1) at (20,11) [Cgray] {};
  \node (n2) at (16,6)  [Cgray] {};
  \node (n3) at (24,6)  [Cgray] {};
  \node (n4) at (18,1)  [Cgray] {};
  \node (n5) at (22,1)  [Cgray] {};
    \foreach \from/\to in {n5/n4,n4/n3,n5/n3}
    \draw[->] (\from) -- (\to);
\foreach \from/\to in {n1/n3}
    \draw[] (\from) -- (\to);

 \foreach \from/\to in {n1/n2}
    \draw[blue] (\from) -- (\to);
\foreach \from/\to in {n4/n2}
    \draw[red][->] (\from) -- (\to);

 
  \node (n1) at (35,11) [Cgray] {};
  \node (n2) at (31,6)  [Cgray] {};
  \node (n3) at (39,6)  [Cgray] {};
  \node (n4) at (33,1)  [Cgray] {};
  \node (n5) at (37,1)  [Cgray] {};
  \foreach \from/\to in {n5/n4,n1/n2,n5/n3}
    \draw[->] (\from) -- (\to);
\foreach \from/\to in {n4/n2}
    \draw[] (\from) -- (\to);
 
 \foreach \from/\to in {n1/n3}
    \draw[blue][-] (\from) -- (\to);
\foreach \from/\to in {n4/n3}
    \draw[red][->] (\from) -- (\to);

 
  \node (n1) at (50,11) [Cgray] {};
  \node (n2) at (46,6)  [Cgray] {};
  \node (n3) at (54,6)  [Cgray] {};
  \node (n4) at (48,1)  [Cgray] {};
  \node (n5) at (52,1)  [Cgray] {};
  \foreach \from/\to in {n1/n3,n5/n4,n1/n2,n5/n3}
    \draw[->] (\from) -- (\to);
\foreach \from/\to in {n4/n2,n3/n4}
    \draw[] (\from) -- (\to);

\end{tikzpicture} \end{center}

\caption{$\P, \P_1,\P_2,\ {\rm and\ }\P_3$}
\label{fig:cut2}
\end{figure}
\end{Example}

\vspace{-4mm}
This example motivates the following definition.

\begin{Definition}\label{def:prec1}
Fix a subset $A\subset V(G)$ and the orientation $\P$ of $G$. The set of all partial orientations $\P'$ of $G$ with $D_{\P'}=D_{\P}$, will be denoted by $S(\P)$.
Let $\preceq_{(A,\P)}$ denote the ordering on the elements of $S(\P)$, 
given by reverse inclusion:
\[
\P'' \preceq \P' \iff \P'\cap\EE(A^c,A)\subsetneq \P''\cap\EE(A^c,A) \ .
\]
We would like to find such partial orientations $\P'$, when $\P'\cap\EE(A^c,A)$ has the smallest possible size, and among them, we consider those with the maximum number of oriented edges from $A^c$ to $A$. 
We fix, once and for all, a {\em total ordering} extending $\preceq_{(A,\P)}$. 
By a slight abuse of notation, $\preceq_A$ will be used to denote this total ordering extension. In particular $\prec_A$ will denote the associated strict total order. 
We denote the {\em maximal element} of $S(\P)$ (with respect to $\prec_A$) with $\P_A$.
\end{Definition}



\begin{Lemma}\label{lem:cut}
Fix a subset $A\subset V(G)$ and a partial orientation $\P$ of $G$. 
Then there exist a cut $\C$ and a partial orientation $\P'$ such that $\C\subset \P'$ and $\P'\sim_1 \P_A$. If $|\EE(A^c,A)\cap \P_A|>0$, then we can choose $\C$ with  $|\EE(A^c,A)\cap \C|>0$.
\end{Lemma}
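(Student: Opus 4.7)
The plan is to construct $\P'$ from $\P_A$ by a sequence of $\sim_1$ swaps and to exhibit a cut $\C=\EE(B,B^c)\subset\P'$. First I would translate the maximality of $\P_A$ under $\prec_A$ into local obstructions: because $\P_A$ minimises $|\P_A\cap\EE(A^c,A)|$ and among such minimisers maximises $|\P_A\cap\EE(A,A^c)|$, any candidate swap $e\to e'$ at a common head $v=e_+=e'_+$ which would strictly improve either quantity must be forbidden by $\bar{e'}\in\P_A$. Splitting into cases according to whether $e_+,e_-,e'_-$ lie in $A$ or $A^c$ yields a finite list of local configurations that $\P_A$ satisfies at every vertex, of the form ``if $v$ receives an edge from one side, then $v$ sends on the other side'' whenever the relevant alternative incoming edge is absent.

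Next I would define $B^c$ by a Dhar-style propagation on $\P_A$. Seed $B^c$ with a source of $\P_A$, i.e.\ a vertex $v$ with $\indeg_{\P_A}(v)=0$, and repeatedly absorb a new vertex $v$ into $B^c$ only when every $G$-edge between $v$ and the current $B^c$ is already oriented from $B^c$ to $v$ in $\P_A$ and absorbing $v$ would not block an intended future swap; this safeguard is precisely what stops the procedure before $B^c$ becomes all of $V(G)$. When the propagation stabilises, every edge of $\EE(B,B^c)$ is either already in $\P_A$ oriented from $B^c$ to $B$ or is unoriented in $\P_A$, and $\P'$ is produced by the following routine: for each unoriented such edge $vu$ with $v\in B^c$, $u\in B$, perform the $\sim_1$ swap at the head $u$ which replaces some $(x\to u)\in\P_A$ with $(v\to u)$. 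One verifies that the propagation leaves such an $x$ available with $x\in B$ (so the removed edge is not a cut edge) and that $(u\to v)\notin\P_A$ (otherwise the propagation would not have placed $v$ in $B^c$ before $u$). Swaps at distinct heads do not interfere, so applying them sequentially yields $\P'\sim_1\P_A$ with $\C=\EE(B,B^c)\subset\P'$.

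For the second claim, assume $|\EE(A^c,A)\cap\P_A|>0$ and pick $(u\to v)\in\P_A$ with $u\in A$ and $v\in A^c$. The local constraints from the first step applied at $v$ force $v$ to send in $\P_A$ to each $A^c$-neighbour of $v$ that does not already send to $v$; iterating backwards from $u$ inside $A$ via these constraints shows that the propagation of the second step must deposit some vertex of $A$ into $B^c$, so $B^c\cap A\neq\emptyset$. Consequently $\EE(B,B^c)$ contains at least one edge with tail in $B^c\cap A$ and head in $B\cap A^c$, giving $|\EE(A^c,A)\cap\C|>0$. The main technical obstacle will be designing the propagation so that all of the swaps remain simultaneously valid: a naive burning over-absorbs into $B^c$, turning edges that need to be swapped away into cut edges and producing a conflict. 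Showing that a correctly constrained propagation still terminates with a $B^c$ whose cut is realisable by a single $\P'$ is the heart of the argument.
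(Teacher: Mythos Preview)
Your plan diverges from the paper's argument and has a real gap at its first step. You seed $B^c$ with ``a source of $\P_A$, i.e.\ a vertex $v$ with $\indeg_{\P_A}(v)=0$'', but nothing in the hypotheses produces such a vertex: every $\P'\in S(\P)$ has the \emph{same} indegree sequence as $\P$ (since $D_{\P'}=D_\P$ means $\indeg_{\P'}(v)=D_\P(v)+1$ for all $v$), so if the given $\P$ has no vertex of indegree~$0$, neither does $\P_A$, and your propagation never starts. Beyond this, your burning rule is not well-defined --- the ``safeguard'' that a vertex may be absorbed only if this ``would not block an intended future swap'' is circular, because which swaps are intended depends on the final $B^c$ --- and you yourself flag this as the main technical obstacle without resolving it. The argument for the second assertion (forcing $B^c\cap A\neq\emptyset$) is likewise a sketch that does not obviously go through.

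The paper's argument is both simpler and avoids the source issue by building the cut \emph{from the edge that witnesses the hypothesis} rather than from an unrelated global seed. In case $|\EE(A^c,A)\cap\P_A|>0$, it fixes $e'\in\EE(A^c,A)\cap\P_A$, lets $C$ be $\{e'_-\}$ together with the vertices of $A^c$ reachable from $e'_-$ along directed $\P_A$-paths beginning with $e'$, and takes $\C=\EE(C,C^c)$. The point is that if some edge of $\EE(C,C^c)$ is missing from $\P_A$, one can slide the corresponding unoriented slot back along the path toward $e'$ by successive $\sim_1$ exchanges until $e'$ itself becomes unoriented; this strictly decreases $|\P_A\cap\EE(A^c,A)|$ while keeping the divisor fixed, contradicting the extremality of $\P_A$. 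The case $|\EE(A^c,A)\cap\P_A|=0$ is handled by the same sliding trick, now applied to an unoriented edge on the $A$--$A^c$ boundary. So the contradiction comes from maximality of $\P_A$ directly, with no global burning and no simultaneous-swap bookkeeping; you should reorganise your argument around forward reachability from $e'$ rather than around a Dhar-type scan from a source.
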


\begin{proof} We consider two different cases:

(i) $|\EE(A^c,A)\cap \P_A|>0$: Assume that $e'\in \EE(A^c,A)$. Then consider the set 
 \[
 C=\{{e'}_{-}\}\cup\{e_+:\ e_+\in A^c\ {\rm and\ there\ is\ a \ path}\ e'=e_1,e_2,\ldots,e\ {\rm in\ }\P_A\}\ ,
 \]
and let $\C=\EE(C,C^c)$. It is clear that $|\EE(C^c,C)|=0$, since otherwise the vertex $e_+$ corresponding to $e\in \EE(C^c,C)$ would be in $C$, as well. By contrary assume that $\C\subsetneq \EE(C,C^c)$, say $e\in \EE(C,C^c)\backslash \P_A$. Consider the path $e_1,e_2,\ldots,e_k$ with $(e_k)_+=e_+$. Then 
the partial orientation $\P'=\{e\}\cup(\P_A\backslash\{e_k\})$ belongs to $S(\P_A)$. By continuing the same procedure, we keep moving an unoriented edge closer to $e_1$ so that we can {\em unorient} the edge $e_1$ and add another oriented edge in $G[C^c]$. 
This way the associated 
divisor will not be changed, however for the new orientation $\P'$ we have $\P'\cap \EE(A^c,A)=(\P_A\cap \EE(A^c,A))\backslash \{e_1\}$, a contradiction (to the choice of $\P_A$).

(ii) $|\EE(A^c,A)\cap \P_A|=0$: First, note that each edge between $A$ and $A^c$, either is undirected, or it is directed from $A^c$ to $A$. 
If $\EE(A,A^c)\subset \P_A$, then $\EE(A,A^c)$ is already a cut. Otherwise, there is an unoriented edge between $A$ and $A^c$.  We may use this edge, and apply the same argument as proof of (i)
(by moving this undirected edge inside a set, and adding a directed edge to $\P_A$) to get  a contradiction as in (i).
\end{proof}

Here we show that 
two equivalence classes \eqref{eq:equiv1} and \eqref{eq:equiv2} are intimately related. 

\begin{Proposition}\label{prop:equiv}
$
\P\sim \P' \Leftrightarrow D_\P\sim D_{\P'}.
$
\end{Proposition}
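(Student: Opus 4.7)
My plan is to prove the two directions separately. For the forward direction, I will verify that each of the generating moves of $\sim$ preserves the linear equivalence class of $D_\P$. By Remark~\ref{rem:un/indirect}, a $\sim_1$ move preserves $D_\P$ exactly. A cycle reversal also preserves $D_\P$ exactly, since each vertex of a directed cycle contributes one in-edge and one out-edge to the cycle, and reversal only swaps these, leaving every indegree invariant. For a cut reversal along $\C = \EE(A,A^c) \subset \P$, I will compute the change of indegrees directly: for $v \in A$, the edges of $\C$ with head $v$ get re-oriented to have their heads in $A^c$, giving
\[
\indeg_{\P'}(v) - \indeg_\P(v) = -|\EE(\{v\}, A^c)| = -\Delta(\mathbf{1}_A)(v),
\]
and an analogous computation on $A^c$ yields $-\Delta(\mathbf{1}_A)$ there as well. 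Hence $D_{\P'} = D_\P - \Delta(\mathbf{1}_A)$, which is linearly equivalent to $D_\P$.

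For the backward direction, suppose $D_{\P'} = D_\P - \Delta(f)$ for some $f \colon V(G)\to\ZZ$. Adding a constant to $f$ (permissible since $\Delta(\mathbf{1}_{V(G)}) = 0$) I may assume $f \geq 0$, and then decompose $f = \sum_{k\geq 1} \mathbf{1}_{A_k}$ via the nested level sets $A_k = \{v : f(v)\geq k\}$, so that $\Delta(f) = \sum_k \Delta(\mathbf{1}_{A_k})$. I would induct on $\sum_v f(v)$; the inductive step reduces to the claim that \emph{for every partial orientation $\P$ and every $A \subseteq V(G)$, there exists $\widetilde{\P} \sim \P$ with $D_{\widetilde{\P}} = D_\P - \Delta(\mathbf{1}_A)$.} To prove the claim, I would first pass from $\P$ to its $\preceq_{A^c}$-maximal representative $\P_{A^c}$ within $S(\P)$ (which lies in the same $\sim_1$-class, hence is $\sim$-equivalent to $\P$), apply Lemma~\ref{lem:cut} to obtain $\P'' \sim_1 \P_{A^c}$ containing an honest cut $\C = \EE(C,C^c)$, and reverse that cut, incurring the divisor change $-\Delta(\mathbf{1}_C)$ computed in the forward direction. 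Iterating the lemma while peeling off the ``wrong-direction'' edges tracked by $\preceq_{A^c}$ accumulates to $-\Delta(\mathbf{1}_A)$. The base case $f=0$ reduces to the standard statement that two partial orientations with the same indegree sequence differ by a sequence of cycle reversals, combined with $\sim_1$ moves to reconcile which underlying edges are oriented.

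The main technical obstacle is that Lemma~\ref{lem:cut} does not hand back the cut $\EE(A,A^c)$ itself, but only some auxiliary $\EE(C,C^c)$ sitting inside some $\P'' \sim_1 \P_{A^c}$, whose set $C$ is produced by the reachability construction in the proof of the lemma rather than chosen by us. Verifying that the iteration of this step against the two-tier priority of $\preceq_{A^c}$ (first minimize $|\P'\cap \EE(A^c,A)|$, then maximize $|\P'\cap \EE(A,A^c)|$) accumulates to the correct net principal divisor $-\Delta(\mathbf{1}_A)$, and that the procedure terminates in a well-founded way, is precisely why Definition~\ref{def:prec1} introduces $\P_A$ with that delicate ordering. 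Identifying a monovariant (for instance, the pair consisting of $|\P'\cap\EE(A^c,A)|$ together with the residual $\ell^1$-discrepancy from the target divisor) that strictly decreases under each cut reversal supplied by the lemma, and ensuring the procedure halts exactly when the cumulative divisor change equals $-\Delta(\mathbf{1}_A)$, is where the bulk of the bookkeeping will go.
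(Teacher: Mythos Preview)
Your forward direction and your base case are essentially the paper's. The gap is in the inductive step: the claim ``for every partial orientation $\P$ and every $A \subseteq V(G)$, there exists $\widetilde{\P} \sim \P$ with $D_{\widetilde{\P}} = D_\P - \Delta(\mathbf{1}_A)$'' is false. Divisors of partial orientations satisfy $-1 \le D_\P(v) \le \deg(v)-1$, so for instance if $q$ is already a source in $\P$ and $A=\{q\}$, then $D_\P(q)-\Delta(\mathbf{1}_A)(q)=-1-\deg(q)$ is unreachable. Your level-set reduction therefore cannot be phrased as a claim about an \emph{arbitrary} pair $(\P,A)$: realizability of the intermediate divisor $D_\P-\Delta(\mathbf{1}_{A_1})$ genuinely depends on the existence of the target $\P'$, and your formulation discards $\P'$ at exactly the moment it is needed.

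The paper's inductive step keeps $\P'$ in play. With $A=\supp(f)$ it passes to $\P_A$ (not $\P_{A^c}$ --- note that $\P_A$ is the representative minimizing $|\P\cap\EE(A^c,A)|$, i.e.\ edges \emph{from} $A$ \emph{into} $A^c$, which is what you must kill to reverse the cut $\EE(A,A^c)$; your choice of $\P_{A^c}$ points the wrong way). It then argues by contradiction, using Lemma~\ref{lem:cut} together with the equation $D_{\P'}=D_\P-\Delta(f)$, that $\P_A$ has \emph{no} edges from $A$ to $A^c$: if such an edge survived, the cut produced by the lemma would force $\indeg_{\P'}(e_+)>\indeg_{\P}(e_+)$ at some $e_+\in A^c$, contradicting the sign of $\Delta(f)$ there. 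Once that obstruction is removed, the entire cut $\EE(A,A^c)$ lies in $\P$ (or one invokes Lemma~\ref{lem:cut} again), so it can be reversed in one move and the new $\P''$ is an honest partial orientation with $D_{\P''}=D_\P-\Delta(\mathbf{1}_A)$. This is exactly the step your proposal flags as the ``main technical obstacle'' but does not resolve; the missing idea is to use $D_{\P'}$ to certify that the obstructing edges cannot exist, rather than trying to grind them down by iterating the lemma.
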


\begin{proof}
Let $\P$ be a partial orientation. Exchanging a pair of edges as Remark~\ref{rem:un/indirect}, and also inverting a cycle in $\P$, keep the associated divisor unchanged. 
On the other hand, by inverting a cut in $\P$ we obtain an orientation whose associated divisor is linearly equivalent to $D_\P$ as in \eqref{eq:equiv2}. Thus $\P\sim\P'$ implies that $D_{\P}\sim D_{\P'}$. 

Now assume that $D_\P\sim D_{\P'}$. Then by \eqref{eq:equiv2} there exists (an integer valued) function $f$ with $D_{\P'}=D_{\P}-\Delta(f)$. The proof is by induction on $|\supp(f)|$. First assume that $D_{\P'}=D_{\P}$. Then we show that 
one can obtain $\P'$ from $\P$, only by performing cycle-inverse, and exchanging pair of oriented edges as Remark~\ref{rem:un/indirect}.
The proof is by reverse induction on $|\P\cap \P'|$. Assume that $e\in \P\backslash \P'$. Since $\indeg_{\P}({e}_{+})=\indeg_{\P'}({e}_{+})$, there exists an edge $e'\in\P'\backslash\P$ with $e'_+=e_+$. 
Now we consider two cases, and in each case we find a third orientation $\P''$ such that $\P\cap \P'$ is a proper subset of $\P\cap\P''$ and $\P'\cap\P''$. Therefore, the result follows by induction  hypothesis. 
\medskip

Case 1. $\bar{e}\not\in\P'$ or $\bar{e'}\not\in\P$: If $\bar{e}\not\in\P'$, then we let $\P''$ be the orientation obtained from $\P$ by replacing $e$ with $e'$ as Remark~\ref{rem:un/indirect}. 
If  $\bar{e'}\not\in\P$, then we perform the similar operation (replacing $e'$ with $e$) on $\P'$. 

Case 2. $\bar{e}\in\P'$ and $\bar{e'}\in\P$: 
Since $\indeg_{\P'}(e'_-)=\indeg_{\P}(e'_-)$, there is $e_1\in \P'\backslash \P$ with $({e_1})_{+}=e'_-$ such that either $e_1$ is undirected in $\P$, or its inverse is in $\P$. In the first case, we are in Case(1). 
Otherwise, by continuing the same argument, we keep moving along a path $\bar{e},{e'},e_1,\ldots$ in $\P'$ and along its inverse 
in $\P$ which will be terminated at some point. So this way, we can create an oriented cycle. Now inverting this cycle in $\P$ we obtain $\P''$ which is either equal to $\P'$, or it has more intersection with both $\P$ and $\P'$, as desired.

Now assume that $|\supp(f)|>0$ and let $A=\supp(f)$. By Definition~\ref{def:prec1} we may assume that $\P=\P_A$. Note that $D_\P=D_{\P_A}$.
If $|\EE(A^c,A)\cap \P|>0$, by Lemma~\ref{lem:cut} there exists a cut $\C=\EE(C,C^c)\subset \P$ with $|\EE(A^c,A)\cap \C|>0$.
This implies that at least $|\EE(A^c,A)\cap \C|$ edges are directed from $A$ to $C$, and for each $e$ in $\EE(A^c,A)\cap \C$, the indegree of $e_+$ in $\P'$ is greater than its indegree in $\P$. Therefore $D_{\P'}(e_+)$ is greater  than $D_{\P}(e_+)$, a contradiction. 
Thus we have 
$|\EE(A^c,A)\cap \P|=0$. If $\EE(A,A^c)\subset \P$, i.e. $\EE(A,A^c)$ is a cut, then we can inverse this cut to obtain $\P''$. By applying the induction assumption on the support of the function taking $D_{\P''}$ to $D_{\P}$, we conclude that $\P''$ is equivalent to $\P$ and to $\P'$, as desired. Otherwise,  applying 
Lemma~\ref{lem:cut} we obtain a cut $\C'\subset \P$, and we process the cut-inverse corresponding to $\C'$ in order to use the induction hypothesis. 
\end{proof}

\begin{Remark}
Note that an acyclic partial orientation can not be equivalent to an orientation without any source. If 
$\Oc$ does not have any source, then  
any vertex belongs to a directed cycle. On the other hand, even if we can find a cut  $\EE(A,A^c)\subseteq \Oc$  to perform a cut-inverse, the vertices of $A$ and $A^c$  still belong to a cycle in the obtained orientation.  
\end{Remark}

\section{Graphic matroid ideals}
Here, we quickly recall some basic notions from 
matroid theory. Our main goal is to fix our notation. A secondary goal is to keep the paper self-contained. Most of the material here is well-known and we refer to 
\cite{MR712251,novik,FatemehFarbod2} for proofs and more details.  
An important feature that we want to emphasize 
is the relation between the ideals associated to an {\em undirected} network, their corresponding {\em oriented} versions, and their {\em Alexander duals}.

\subsection{Oriented matroid ideals}\label{subsec:oriented-matroid}
An {\em oriented hyperplane arrangement} is a real hyperplane arrangement along with a choice of a {\em positive side} for each hyperplane. Equivalently, one may fix a set of linear forms vanishing on hyperplanes to fix the {\em orientation}. 
For each (oriented) hyperplane arrangement $\{\H_1,\ldots, \H_m\}$ with hyperplanes $\H_j = \{ \vb \in \RR^{n-1}: h_j(\vb)=c_j\}$ living in $\RR^{n-1}$, one can consider a central hyperplane arrangement
$\A=\{\H_1,\ldots,\H_m, \H_g\}$ in $\RR^{n}=\RR^{n-1}\times \RR$ such that
\[
\H_j = \{ (\vb,w) \in \RR^{n-1}\times \RR : h_j(\vb)=c_jw\} \ {\rm\ and\ }\ \H_g=\{ (\vb,w) \in \RR^{n-1}\times \RR : w=0\}.
\] 
Now if we restrict ourselves to the {\em positive side} of $\H_g$, more precisely, consider the restriction of $\A$ to the hyperplane 
$\{(\vb,w) \in \RR^{n-1}\times \RR: w=1\}$ we obtain an affine hyperplane arrangement. 
Let $\Rb=k[\xb]$ be the polynomial ring in $m$ variables $\{x_i: \H_i \in \A\}$ and $\Sb=k[\xb,\yb]$ be the polynomial ring in $2m$ variables $\{x_i,y_i: \H_i\in\A\}$.
For any (affine) oriented hyperplane arrangement one can define (see \cite{novik}) the associated {\em oriented matroid ideal}: let $\{h_j\}$ be $m$ nonzero linear forms defining the hyperplane arrangement $\A$ with hyperplanes 
$\H_j = \{ \vb \in \RR^{n-1} : h_j(\vb)=c_j\}$ in $\RR^{n-1}$. The oriented matroid ideal associated to $\A$ is the ideal in $2m$ variables of the form:
\[
\OO = \langle \mb(\vb) : \vb \in \RR^{n-1} \rangle \subset K[\xb,\yb]\ ,
\]
where for each $\vb \in \RR^{n-1}$
\[
\mb(\vb)=\prod_{h_i(\vb)>c_i}x_i  \prod_{h_i(\vb)<c_i}y_i \ , 
\]
the multiplication being over all $i=1,\ldots,m$. Note that any two points in the relative interior of a cell will give rise to the same monomial.
Moreover, the hyperplanes $\H_1,\ldots, \H_m,\H_g$ partition $\mathbb{R}^{n-1}$ into relatively open convex polyhedra
called the cells of the corresponding arrangement. The cells of dimension zero are called {\em vertices}.
A cell is called a {\em bounded cell} if it is bounded as a subset of $\mathbb{R}^{n-1}$, and we denote $\B_{\mathcal{A}}$ for the set consisting of all bounded
cells of $\mathcal{A}$ which is a regular CW-complex (see e.g., \cite{novik}).

 There is a canonical surjective $k$-algebra homomorphism 
$
\phi \colon \Sb \rightarrow \Rb
$
defined by sending $x_i$ and $y_i$ to $x_{i}$ for all $i$. The kernel of this map is precisely the ideal generated by $\{x_1-y_1,\ldots,x_m-y_m\}$, 
  which we denote by $\aa$. 
The induced isomorphism 
$
\bar{\phi} \colon \Sb /\aa \xrightarrow{\sim} \Rb
$
is the {\em algebraic indegree map}, and it relates the ideals $\OO$ to the ideal 
\[
{\bar{\OO}}=\langle \bar{\mb}(\vb)= \prod_{h_i(\vb)\neq c_i}x_i: \vb \in \RR^{n-1} \rangle \subset \Rb\ .
\]

Before stating our next result, we recall that for a squarefree monomial ideal
$I=\langle \xb^{a_1},\ldots, \xb^{a_r} \rangle\subset k[\xb]$  its {\em Alexander dual}
is defined by $I^\vee= {\mm}^{a_1}\cap\cdots\cap  {\mm}^{a_r}$,  where  $\xb^a=\prod_{i\in a} x_i$ and ${\mm}^{a}=\langle z_i:\ i\in a\rangle$
 for each vector $a\in \mathbb{N}^n$.

\begin{Proposition}\label{prop:primary} 
Let $\mathcal{V}$ be the set consisting of the vertices  of the bounded complex $\B_{\mathcal{A}}$, and for $\vb\in \mathcal{V}$, let $\bar{P}_{\vb}=\langle x_i:\ {h_i(\vb)\neq c_i} \rangle$. Then 
the minimal prime decomposition of the Alexander dual of the ideal $\bar{\OO}$ is $\bar{\OO}^\vee=\bigcap_{\vb\in \mathcal{V}} \bar{P}_{\vb}$. 
\end{Proposition}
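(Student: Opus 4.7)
The plan is to reduce the proposition to a statement about the minimal generating set of $\bar{\OO}$ and then analyze this set using the cellular structure of $\A$. Recall that for any squarefree monomial ideal $I \subset \Rb$ with minimal generators $x^{\sigma_1}, \ldots, x^{\sigma_r}$, where $x^{\sigma} = \prod_{i \in \sigma} x_i$, the Alexander dual admits the minimal primary decomposition $I^\vee = \bigcap_{j=1}^{r} P_{\sigma_j}$ with $P_\sigma = \langle x_i : i \in \sigma \rangle$. Thus proving $\bar{\OO}^\vee = \bigcap_{\vb \in \mathcal{V}} \bar{P}_\vb$ is equivalent to showing that $\{\bar{\mb}(\vb) : \vb \in \mathcal{V}\}$ is precisely the set of minimal generators of $\bar{\OO}$.

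To analyze the minimal generators of $\bar{\OO}$, I would introduce for each $\vb \in \RR^{n-1}$ its \emph{type} $T_\vb = \{i : h_i(\vb) = c_i\}$, so that $\bar{\mb}(\vb) = \prod_{i \notin T_\vb} x_i$. Then $\bar{\mb}(\vb')$ divides $\bar{\mb}(\vb)$ if and only if $T_\vb \subseteq T_{\vb'}$. Since the type is constant on each (relatively open) cell of $\A$, and the closure of a positive-dimensional cell $F$ contains lower-dimensional cells $F'$ with $T_F \subsetneq T_{F'}$, the minimal generators must come from cells of minimum dimension, namely from the $0$-dimensional cells of $\A$. As every $0$-dimensional cell is trivially bounded as a subset of $\RR^{n-1}$, these $0$-cells are precisely the elements of the vertex set $\mathcal{V}$ of $\B_\A$.

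It then remains to establish two facts. First, for every $\vb \in \RR^{n-1}$ there exists $\vb' \in \mathcal{V}$ with $T_\vb \subseteq T_{\vb'}$, so that $\bar{\mb}(\vb')$ divides $\bar{\mb}(\vb)$. Geometrically, the smallest flat $F_\vb = \bigcap_{i \in T_\vb} \H_i$ containing $\vb$ must contain some $0$-dimensional intersection of hyperplanes of $\A$; this follows from the essentiality of the arrangement (inherited from the central arrangement in $\RR^n$ via the construction recalled in the excerpt), by induction on $\dim F_\vb$, using that any proper flat is cut by at least one additional hyperplane of $\A$. Second, for distinct $\vb, \vb' \in \mathcal{V}$, the types $T_\vb$ and $T_{\vb'}$ must be incomparable: if $T_\vb \subseteq T_{\vb'}$ then $\vb' \in \bigcap_{i \in T_\vb} \H_i$, but since $\vb$ is a $0$-dimensional flat this intersection is the singleton $\{\vb\}$, forcing $\vb' = \vb$.

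The hard part will be the first of these two facts, ensuring that each flat $F_\vb$ contains a vertex of $\A$; this geometric input depends on the essentiality of the underlying arrangement, and one must be careful to transfer this property from the central arrangement in $\RR^n$ to the affine arrangement in $\RR^{n-1}$ using the construction described in the excerpt. Once both facts are verified, $\{\bar{\mb}(\vb) : \vb \in \mathcal{V}\}$ is the minimal generating set of $\bar{\OO}$, and Alexander duality immediately yields the minimal primary decomposition $\bar{\OO}^\vee = \bigcap_{\vb \in \mathcal{V}} \bar{P}_\vb$.
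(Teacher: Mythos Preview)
Your argument is correct and takes a genuinely different route from the paper's. You work directly with $\bar{\OO}$: you observe that the minimal prime decomposition of $\bar{\OO}^\vee$ is determined by the minimal generating set of $\bar{\OO}$, and then you identify that set geometrically with the vertices of $\A$ via the two facts (every flat contains a vertex, and distinct vertices have incomparable types). Your identification of essentiality as the crucial input for the first fact is right, and your inductive sketch can be made rigorous exactly as you indicate: after translating a point of the flat $F$ to the origin, all defining functionals would vanish on $F$ if no hyperplane cut $F$ properly, forcing every nonempty intersection of hyperplanes to contain a translate of $F$ and hence to have positive dimension, contradicting essentiality.

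The paper, by contrast, does not analyze $\bar{\OO}$ directly. It takes as known (from \cite{novik}) the decomposition $\OO^\vee=\bigcap_{\vb\in\mathcal V}P_\vb$ for the oriented ideal, and then shows that the specialization $y_\ell\mapsto x_\ell$ carries this decomposition to $\bar{\OO}^\vee=\bigcap_{\vb\in\mathcal V}\bar P_\vb$: the point is that each $P_\vb$ contains at most one of $x_\ell,y_\ell$, so the identification introduces no new containments among the specialized primes and no redundancy. Your approach is more self-contained (you do not import the oriented result) and makes the geometry explicit; the paper's approach is shorter once the oriented decomposition is granted, and it dovetails with the regular-sequence argument used immediately afterward in Proposition~\ref{lem:sameBetti}. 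One small wrinkle in your write-up: the passage about closures of positive-dimensional cells is suggestive but not quite the argument you need (an unbounded cell's closure need not contain a $0$-cell); the clean statement is exactly your ``first fact'' about flats, and you should lead with that rather than the closure heuristic.
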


\begin{proof}
First note that from the Alexander duality definition, the minimal prime decomposition of $\OO^\vee$ is $\OO^\vee=\bigcap_{\vb\in \mathcal{V}} P_{\vb}$, where 
$P_{\vb}=\langle x_i:\ {h_i(\vb)>c_i} \rangle+\langle y_j:\ {h_j(\vb)<c_j} \rangle$.
Also note that for any $i$ and $\OO_i^{\vee}=\OO^\vee\tensor \Sb/(x_1-y_1,\ldots,x_i-y_i)$, the minimal primes of $\OO_i^{\vee}$
are obtained from the minimal primes of $\OO^\vee$ by specializing the variable $y_\ell$ to $x_\ell$ for all $\ell\leq i$. 
Let $\bar{P}_{\vb}$ denote the ideal obtained from $P_\vb$ by identifying the variables $x_\ell$ and $y_\ell$ for each $\ell\leq i$.
 It is easy to see that $\OO_i^{\vee}\subseteq \bigcap \bar{P}_{\vb}$.

Assume that $m$ is a monomial in $\bigcap \bar{P}_{\vb}$. We want to show that $m\in \OO_i^{\vee}$. 
Let $x_\ell\in \supp(m)$ and  $m=x_\ell m'$ for some $m'$. Thus $x_\ell y_\ell m'\in P_{\vb}$ , and so $x_\ell m'$ or $y_\ell m'$ belongs to $P_{\vb}$, since each prime ideal contains at most one of the variables $x_\ell$ or $y_\ell$. 
However for each $\vb$ the images of the both monomials $x_\ell m'$ and $y_\ell m'$ in $\bar{P}_\vb$ are equal to $m$. Therefore $m\in \OO_i^{\vee}$. 
If $x_\ell\not\in \supp(m)$, then $m\in \bar{P}_{\vb}$ shows that $m\in P_\vb$. This implies that $m$ belongs to $\bigcap P_{\vb}$ and so to $\OO^{\vee}$. 
Thus after identifying  $x_\ell$ and $y_\ell$ we obtain again $m$ (since $x_\ell,y_\ell\not\in\supp(m)$) which belongs to $\OO_i^{\vee}$.
\end{proof}


\begin{Remark}\label{rem:CM}
$(i)$ The rings $\Sb/\OO$ and $\Rb/\bar{\OO}$ are Cohen-Macaulay. 
Moreover, 
 the multigraded Betti numbers of $\OO$ and ${\bar {\OO}}$ coincide (see \cite[Thm. 10.3]{FatemehFarbod2}).

$(ii)$ By \cite[Thm. 3]{Eagon} the minimal free resolutions of the ideals $\OO^{\vee}$ and ${\bar{\OO}}^{\vee}$ are linear.
\end{Remark}

\begin{Proposition}\label{lem:sameBetti}
$(i)$ The set
$\{x_1-y_1,\ldots, x_m-y_m\}$
forms a regular sequence for $\Sb/\OO^\vee$. 

$(ii)$ The $\ZZ$-graded (and multigraded) Betti numbers of $\OO^\vee$ and ${\bar {\OO}}^\vee$ coincide.
\end{Proposition}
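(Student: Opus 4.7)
The plan is to establish (i) directly from the primary decomposition already recorded in Proposition~\ref{prop:primary}, and then deduce (ii) from (i) via the standard preservation of Betti numbers under reduction modulo a regular sequence of linear forms.

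For (i), the key observation is that, as noted in the proof of Proposition~\ref{prop:primary}, the minimal primes of $\OO^\vee$ in $\Sb$ are precisely the ideals $P_\vb = \langle x_i : h_i(\vb) > c_i\rangle + \langle y_j : h_j(\vb) < c_j\rangle$. Since the conditions $h_\ell(\vb) > c_\ell$ and $h_\ell(\vb) < c_\ell$ are mutually exclusive, every $P_\vb$ contains at most one of $x_\ell, y_\ell$ for each $\ell$; in particular no $P_\vb$ contains the linear form $x_\ell - y_\ell$. Because $\OO^\vee$ is a squarefree monomial ideal and hence radical, associated primes coincide with minimal primes, so $x_1 - y_1$ avoids every associated prime of $\Sb/\OO^\vee$ and is a nonzerodivisor.

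For the inductive step, set $\OO_i^\vee = \OO^\vee + (x_1-y_1,\dots,x_i-y_i)$. The argument inside the proof of Proposition~\ref{prop:primary} shows that after identifying $x_\ell$ with $y_\ell$ for $\ell \leq i$, the ideal $\OO_i^\vee$ becomes $\bigcap_\vb \bar P_\vb^{(i)}$, where each $\bar P_\vb^{(i)}$ is a monomial prime in the polynomial ring $\Sb/(x_1-y_1,\ldots,x_i-y_i)$. These primes still contain at most one of $x_\ell,y_\ell$ for every $\ell>i$, so prime avoidance applies verbatim and $x_{i+1}-y_{i+1}$ is a nonzerodivisor modulo $\OO_i^\vee$. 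Induction completes (i).

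For (ii), having the regular sequence of linear forms $\aa = (x_1-y_1,\dots,x_m-y_m)$ on $\Sb/\OO^\vee$, the standard Bruns--Herzog reduction gives
\[
\beta_{i,\js}^{\Sb}\bigl(\Sb/\OO^\vee\bigr) \;=\; \beta_{i,\js}^{\Sb/\aa}\bigl(\Sb/(\OO^\vee + \aa)\bigr).
\]
Applying the proof of Proposition~\ref{prop:primary} at $i=m$ identifies $(\OO^\vee+\aa)/\aa$ with $\bar\OO^\vee$ under the isomorphism $\bar\phi \colon \Sb/\aa \xrightarrow{\sim} \Rb$, so the right-hand side equals $\beta_{i,\js}^{\Rb}(\Rb/\bar\OO^\vee)$. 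Combined with the usual shift $\beta_i(I) = \beta_{i+1}(\Sb/I)$, this gives the $\ZZ$-graded claim. For the multigraded refinement, equip $\Sb$ with the coarser $\ZZ^m$-grading $\deg(x_\ell)=\deg(y_\ell)=\eb_\ell$; the ideal $\OO^\vee$ is multihomogeneous, the forms $x_\ell-y_\ell$ are multihomogeneous of degree $\eb_\ell$, and $\phi$ preserves multidegrees, so the entire reduction argument passes to the multigraded setting.

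The only nontrivial point is verifying that prime avoidance survives each successive specialization, but this is already built into the inductive description of the $\OO_i^\vee$ in Proposition~\ref{prop:primary}; with that in hand, (ii) is a routine application of a standard principle.
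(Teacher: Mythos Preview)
Your proof is correct and follows essentially the same approach as the paper: both argue that each $x_\ell - y_\ell$ lies outside every associated (equivalently, minimal) prime $P_\vb$ of $\OO^\vee$ and of its successive specializations $\OO_i^\vee$, using the description from Proposition~\ref{prop:primary}, and then invoke the standard preservation of Betti numbers under reduction modulo a regular sequence of linear forms. Your write-up is a bit more explicit about radicality and about the multigraded refinement, but the structure is identical; one small terminological quibble is that what you call ``prime avoidance'' is really just the characterization of nonzerodivisors as elements avoiding all associated primes.
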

\begin{proof}
(i) By contrary assume that $x_1-y_1$ is a zerodivisor element modulo $\OO^\vee$.  
Then there exists $\vb$ such that $x_1-y_1$, and so $x_1,y_1$ belongs to $P_{\vb}$ which is a contradiction, since at most one of the variables $x_1,y_1$ belongs to $P_\vb$.
Now we show that $x_2-y_2$ is nonzerodivisor modulo $\OO_1^{\vee}=\OO^{\vee}\tensor \Sb/(x_1-y_1)$. Note that by Proposition~\ref{prop:primary} the prime components of $\OO_1^{\vee}$ are obtained by identification $x_1=y_1$ in prime components 
$\OO^{\vee}$. If $x_2-y_2$ is a zerodivisor element modulo $\OO_1^\vee$, then  there exists $\vb$ such that $x_2,y_2$ belong to $\bar{P}_\vb$. Thus $x_2,y_2$ belong to a prime component of $\OO_1^{\vee}$, and so to $P_\vb$.
This contradicts by the fact that each prime component of $\OO^{\vee}$ contains at most one of the variables $x_2$ or $y_2$.
Then continuing the same argument, we show that $x_i-y_i$ is a nonzerodivisor modulo $\OO_{i-1}^\vee=\OO^\vee\tensor \Sb/(x_1-y_1,\ldots,x_{i-1}-y_{i-1})$ for all $i$.

(ii) The result follows by \cite[Lem.~3.15]{EisenbudSyz} (see also \cite[Prop.~1.1.5]{Bruns}).
\end{proof}

\subsection{Graphic hyperplane arrangements}\label{sec:graphic}
\label{sec:BG} 
We fix a pointed graph $(G,s)$ on the vertex set $[n]$ with the edge set $E(G)$. 
Following \cite{MR712251}, we define the {\em graphic hyperplane arrangement} as follows. This arrangement lives in the Euclidean space $C^0(G,\RR)$, i.e. the vector space of all real-valued functions on $V(G)$ endowed with the bilinear form 
$
\langle f_1, f_2\rangle = \sum_{v\in V(G)}{f_1(v)f_2(v)}.
$
Let $C^1(G, \RR)$ be the vector space of all real-valued functions on $\EE(G)$, and let $\partial^\ast : C^0(G,\RR) \rightarrow C^1(G,\RR)$ denote the usual coboundary map.
For each edge $e \in \EE(G)$ let $\H_{e} \subset C^0(G,\RR)$ denote the hyperplane
\[
\H_{e} = \{f \in C^0(G,\RR): (\partial^\ast f) (e)=0 \} \ .
\]
Consider the arrangement 
$
\H'_G= \{\H_e : e \in \EE(G)\} 
$
in $C^0(G,\RR)$. Since $G$ is connected, we know $\bigcap_{e \in \EE(G)}{\H_e}$ is the $1$-dimensional space of constant functions on $V(G)$, which is the same as the kernel of $d$. We define the {\em graphic arrangement} corresponding to $G$, denoted by $\H_G$, to be the restriction of $\H'_G$ to the hyperplane 
\begin{equation}\label{eq:kerperp}
(\Ker(d))^{\perp} = \{f \in C^0(G, \RR) : \sum_{v\in V(G)}f(v)=0\} \ .
\end{equation}

There is a one-to-one correspondence between acyclic orientations of $G$ and the regions of $\H_G$ (see, e.g., \cite[Lem.~7.1 and Lem.~7.2]{MR712251}). In particular, the connected cuts  of $G$ are corresponding to 
the lowest dimensional regions of $\H_G$. Given any function $f \in C^0(G,\RR)$ one can label each vertex $v$ with the real number $f(v)$. In this way we obtain an acyclic partial orientation of $G$ by directing $v$ toward $u$ if $f(u) < f(v)$. Recall this means we have an acyclic orientation on the graph $G/f$ obtained by contracting all unoriented edges (i.e. all edges  $\{u,v\}$ with $f(u)=f(v)$).
We are mainly interested in acyclic orientations of $G$ with a {\em unique source} at $s \in V(G)$. For this purpose, we define
\vspace{-1mm}
\[
\H_{s}=\{f\in C^0(G, \RR): f(s)=-1 \}  \ .
\]
The restriction of the arrangement $\H_G$ to $\H_{s}$ will be denoted by $\H_G^{s}$. We denote the {\em bounded complex} (i.e. the polyhedral complex consisting of bounded cells) of $\H_G^{s}$ by $\B_G^{s}$. 
By \eqref{eq:kerperp}, the restriction of $\H_G$ to $\H_{s}$ coincides with the restriction of $\H_G$ to
\[
(\H_{s})'=\{f\in C^0(G, \RR): \sum_{v \ne q}f(v)=1 \}  \ .
\]
The regions of $\B_G^{s}$ are corresponding to acyclic orientations with a unique source at $s$ (see e.g., \cite[Theorem~7.3]{MR712251}). Fixing an orientation $\Oc$ of the graph 
$G$ will fix the linear forms $(d f)(e)=f(e_{+})-f(e_{-})$ for $e \in \Oc$ and gives an orientation to the hyperplane arrangement $\H_G^{s}$. The oriented matroid ideal associated to this oriented hyperplane arrangement $\H_G^{s}$ is denoted by 
$\C_G^s$ 
\footnote{We use the notation $\C_G^s$ in order to be coherent with the notation used in \S\ref{sec:k-terminal}, however in \cite{novik} it 
is denoted by $\Oc_{\mathcal{M}}$, where $\mathcal{M}$ is the associated graphic matroid. We let $\C_G^s$, $\C_G$, $\Tf_G^s$ and $\Tf_G$, respectively, denote the ideals $\OO_{\H_G^{s}}$, $\bar{\OO}_{\H_G^{s}}$, $\OO_{\H_G^{s}}^\vee$ and $\bar{\OO}_{\H_G^{s}}^\vee$ from  \S\ref{subsec:oriented-matroid}.} and is called the {\em graphic oriented matroid ideal} associated to $(G,s)$. As an example we refer to \cite[Figure~11]{FatemehFarbod2} which is the bounded complex associated to the graph depicted in Figure~\ref{fig:spanning trees1}.

\vspace{-2mm}

\subsubsection{Toppling ideals}
Let $k[\zb]$ be the polynomial ring in the $n$ variables $\{z_v: v \in V(G)\}$. Any effective divisor $D=\sum_{v\in V(G)} D(v)$ with $D(v)\geq 0$ for all $v\in V(G)$, gives rise to a monomial
$
\zb^D= \prod_{v \in V(G)}{z_{v}^{D(v)}}.
$
Associated to every graph $G$ there is a canonical ideal 
\[
\begin{aligned}
I_G= \langle \zb^{D_1} - \zb^{D_2} : \, D_1 \sim D_2 \text{ both effective divisors}\rangle
\end{aligned}
\]
which encodes the linear equivalences of divisors on $G$ defined by Dhar \cite{Dhar90} (see also \cite{CoriRossinSalvy02}).
Once we fix a vertex $s$ of $G$ as a source, we denote $\MM_G^s$ for the initial ideal of $I_G$ with respect to 
the reverse lexicographic ordering 
 induced by the total ordering on the variables compatible with the distances
 of vertices from $s$.
\begin{Remark}
The polyhedral cell complex $\B_G^{s}$ supports a minimal graded free resolution for  $\C_G^s$, $\C_G$ and $\MM_G^s$. The interested reader is referred to \cite{novik, FatemehFarbod2}  for more details.  
\end{Remark}

\section{Minimal free resolutions of system ideals }
\label{sec:k-terminal}
We now study the  ideals associated to the systems introduced in \S\ref{subsec:ST} and \S\ref{subsec:SMT} in details.

\subsection{Cut ideal $\C_{s,t}$}\label{sec:cut}
Here we precisely state the results in Theorem~\ref{intro:thm1}.
We recall that given a polyhedral complex and a subset $U$ of its vertices, its {\em induced subcomplex}  on $U$,
is the set of all its faces  whose vertices belong to $U$.  
We will show that the minimal free resolution of $\C_{s,t}$ is encoded in a subcomplex of the polyhedral subcomplex $\MB_G^{s}$ from \S\ref{sec:graphic}. 

\medskip

Let $\Sc_{s,t}=\{\C_1,\ldots,\C_\ell\}$ be the set containing all connected cuts $\C_i=\EE(A_i,A_i^c)$ of $G$,
with $t\in A_i$ and $s\in A_i^c$. 
For each $i$, let $\cb_i$ denote the vertex of $\MB_G^{s}$ corresponding  to 
the cut  $\C_i$. 
We denote $\mathcal{D}_G^{s,t}$ for the induced subcomplex of $\MB_G^{s}$ on the vertices $\cb_1,\ldots,\cb_\ell$. 
By a slight abuse of notation, $\cb_i$ will be used to denote its corresponding $\mathbb{N}^{n}$-vector, where the $r$-th entry corresponding to the vertex $v_r$, is $|\{e\in \C_i:\ e_+=v_r\}|$.
In order to have $\mathcal{D}_G^{s,t}$ as a polyhedral complex supporting the minimal free resolution of the ideal
\vspace{-1mm}
$$\C_{s,t}^\Oc=\langle \yb^{\C_i}=\prod_{e \in \C_i}{y_{e}}:\ \C_i\in \Sc_{s,t}\rangle,$$ 
we label the vertices of $\mathcal{D}_G^{s,t}$ by assigning the monomial  $\yb^{\C_i}$ as the label
of the vertex $\cb_i$.

\begin{Theorem} \label{thm:DG}
The labeled polyhedral cell complex $\mathcal{D}_G^{s,t}$ gives a $\mathbb{Z}^{2m}$-graded minimal free resolution for the ideals $\C_{s,t}^\Oc$ and $\C_{s,t}$. In particular, the $\beta_d(\C_{s,t}^\Oc)$ counts the $d$-dimensional bounded regions of $\mathcal{D}_G^{s,t}$ for all $d$.
\end{Theorem}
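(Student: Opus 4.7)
The plan is to invoke the Bayer--Sturmfels criterion, which states that a labeled polyhedral cell complex $X$ supports a $\mathbb{Z}^{2m}$-graded minimal free resolution of the monomial ideal generated by its vertex labels if and only if (a) for each monomial $\mu$, the subcomplex $X_{\preceq\mu}$ consisting of faces whose label divides $\mu$ is empty or contractible, and (b) for every strict face inclusion $F'\subsetneq F$, the label of $F'$ strictly divides the label of $F$. The first step is to make the combinatorics of $\mathcal{D}_G^{q,t}\subseteq \B_G^q$ explicit: by the description in \S\ref{sec:BG}, a face of $\B_G^q$ is indexed by an ordered connected partition $(B_1,\dots,B_k)$ of $V(G)$ with $q\in B_1$; its vertices are the $k-1$ two-block coarsenings $(B_1\cup\cdots\cup B_i,\,B_{i+1}\cup\cdots\cup B_k)$; and its label is $\prod_e y_e$ over the ``forward'' edges $e\in\EE(G)$ with $e_-\in B_i$, $e_+\in B_j$, $i<j$. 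A direct check then shows that the face indexed by $(B_1,\dots,B_k)$ lies in the induced subcomplex $\mathcal{D}_G^{q,t}$ precisely when $t\in B_k$, since the tightest constraint comes from the coarsening at $i=k-1$.

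Condition (b) is the easy part: a strict subface corresponds to refining some block $B_i$ into two consecutive blocks, which strictly enlarges the set of forward edges, so the subface label strictly divides the face label. In particular this forces $F$ and any $F'\subsetneq F$ to carry distinct multidegrees, giving minimality once exactness is known.

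Condition (a) is the main obstacle. Writing $S\subseteq\EE(G)$ for the support of the (squarefree) monomial $\mu$, the subcomplex $(\mathcal{D}_G^{q,t})_{\preceq\mu}$ consists of ordered connected partitions $(B_1,\dots,B_k)$ with $q\in B_1$, $t\in B_k$, and every forward edge in $S$. My plan is to leverage Corollary~\ref{cor:directedcuts}, which tells us that $\B_G^q$ minimally resolves $\C_G^q$, so that $(\B_G^q)_{\preceq\mu}$ is already contractible. I would then construct a deformation retraction from $(\B_G^q)_{\preceq\mu}$ onto $(\mathcal{D}_G^{q,t})_{\preceq\mu}$ by ``pushing $t$ upward'': starting from a partition with $t\in B_j$ and $j<k$, refine the partition by detaching $t$ and, when connectivity forces it, the smallest set of neighbors, into a new block lying above $B_j$, and iterate until $t$ reaches the top block. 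At each step one verifies that forward edges remain in $S$ and that each block stays connected; the fact that membership in $\mathcal{D}_G^{q,t}$ is controlled by the \emph{single} condition $t\in B_k$ is what makes this push well defined. A cleaner alternative, should the retraction prove unwieldy, would be to construct a complete acyclic matching on the face poset of $(\mathcal{D}_G^{q,t})_{\preceq\mu}$ via discrete Morse theory, pairing each partition with a canonical adjacent merge and leaving exactly one unmatched ``coarsest'' partition as a cone point. Once (a) and (b) are verified, the cellular chain complex of $\mathcal{D}_G^{q,t}$ with the given monomial labels is the desired minimal free resolution, and minimality immediately yields that $\beta_d(\C_{q,t}^\Oc)$ equals the number of $d$-dimensional faces (bounded regions) of $\mathcal{D}_G^{q,t}$.
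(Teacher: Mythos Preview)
Your framework matches the paper's: both invoke the acyclicity criterion for cellular resolutions (\cite[Prop.~4.5]{MillerSturmfels}), and your description of the faces of $\mathcal{D}_G^{q,t}$ as ordered connected partitions with $q$ in the bottom block and $t$ in the top block is correct. There is a small slip in your treatment of~(b): a strict subface $F'\subsetneq F$ corresponds to \emph{merging} two consecutive blocks (a coarsening), not to refining, so the subface carries \emph{fewer} forward edges and hence a strictly smaller label; your stated conclusion is right but the mechanism is reversed. The paper dispatches (b) in one line as well.

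The genuine gap is in~(a). Your ``push $t$ upward'' retraction detaches $t$ (plus whatever neighbours connectivity demands) into a new block above $B_j$; this is a \emph{refinement}, and it creates new forward edges---every oriented edge of $\EE(G)$ from the remainder of $B_j$ into the detached piece. Nothing forces those new variables $y_e$ to lie in the support $S$ of $\mu$, so the intermediate partitions can fall outside $(\B_G^q)_{\preceq\mu}$ and the homotopy is not well defined. (The opposite move, merging $B_j,\dots,B_k$ into a single top block, keeps the label small but can destroy block-connectedness, so it need not land on a single face of $\B_G^q$ either.) Your discrete Morse alternative is only a hope, not an argument. The paper sidesteps this entirely and works \emph{geometrically inside the arrangement}: from the cuts $\C_1,\dots,\C_\ell$ appearing as vertices of $(\mathcal{D}_G^{q,t})_{\leq\delta}$ it manufactures an explicit point $\pb$, checks that $\pb$ itself lies in $(\mathcal{D}_G^{q,t})_{\leq\delta}$, and then shows that for every $\rb$ in the subcomplex no hyperplane $\H_e$ strictly separates $\pb$ from $\rb$, so the straight segment from $\pb$ to $\rb$ stays inside a single cell of the subcomplex. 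This proves $(\mathcal{D}_G^{q,t})_{\leq\delta}$ is star-shaped about $\pb$, hence contractible, without ever appealing to the contractibility of the ambient $(\B_G^q)_{\leq\delta}$.
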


\begin{proof}
We first show that $\mathcal{D}_G^{s,t}$ supports the minimal free resolution of $\C_{s,t}^\Oc$. By \cite[Prop. 4.5]{MillerSturmfels} we only need to check that $(\mathcal{D}_G^{s,t})_{\leq\delta}$ is acyclic for any $\delta\in \mathbb{N}^n$, and the monomial labels of each pair of the cells $F_1\subsetneq F_2$ are different. The latter is clear because the labeling of the cells  are corresponding to the partitions of $G$, see e.g., \cite[p.112]{MR712251}.
We follow the strategy of the proof of Lemma 6.4 in \cite{Anton}. Assume that $\cb_1,\ldots,\cb_\ell$ are the vertices  of $(\mathcal{D}_G^{s,t})_{\leq\delta}$, and $\C_1,\ldots,\C_\ell$ are their corresponding cuts. 
Since the ideal $\C_{s,t}^\Oc$ and the labels of the cells are all squarefree, we can assume that $\delta$ is squarefree as well. Let $J_1=V(G)\backslash \{e_+: e\in\C_1\cup\cdots\cup\C_\ell\}$. Note that  $s\in J_1$, since $s$ is a source. Let $J_2\subset V(G)\backslash J_1$ be the connected component of $G[V(G)\backslash J_1]$ containing $s$. Now consider the subset $L=V(G)\backslash J_2$ and let $\pb$ be the $n$-vector $\frac{1}{|L|}\epsilon_L$, where $\epsilon_L$ is $1$ in entries corresponding to the vertices in $L$, and $0$ in other entries. We claim that $\pb$ is the star point of $(\mathcal{D}_G^{s,t})_{\leq\delta}$. Note that $\cb_1,\ldots,\cb_\ell$ are the vertices of $\MB_G^{s}$, and the graph obtained by contracting the vertices corresponding to $\pb$ will have a unique source. Assume that 
$\pb$ belongs to some cell  labeled by the monomial $a_\pb$ 
of $S$. Note that since $J_2\subset A_i^c$ for each $\C_i=\EE(A_i,A_i^c)$, we have $t\not\in J_2$. Thus $\pb$ belongs to $\mathcal{D}_G^{s,t}$. 

\smallskip

We now show that $a_\pb\subset \delta$. It is clear that $\supp(\cb_i)\subseteq L$ for all $i$, since one of the endpoints of each edge in the cut has positive indegree. Now consider $\ell\in L$ with $a_i>0$. 
Thus there exists an edge $e\in \C_1\cup\cdots\cup\C_\ell$, and a path from $s$ to $e_+$ not having intersection with any other vertex in $V(\C_1\cup\cdots\cup\C_\ell)$. Thus $e_-$ belongs to $V(\C_1\cup\cdots\cup\C_\ell)$ and so $\delta_i\geq 1$. 

\smallskip

Let $\rb$ be a point in $(\mathcal{D}_G^{s,t})_{\leq\delta}$. If $r_i>r_j$ for some edge $\{i,j\}$, then $i\in L$ and so $p_i=\frac{1}{|L|}\geq p_j$. Thus no hyperplane $\mathcal{H}_e$ (strictly) separates $\rb$ and $\pb$. Therefore the line segment $(\pb,\rb)$ connecting $\pb$ and $\rb$, sits inside some cell $R$ of $\MB_G^{s}$. Now we have to show that any interior point of $(\pb,\rb)$ is also in $(\mathcal{D}_G^{s,t})_{\leq\delta}$. 
The support of the monomial associated to $R$ is a subset of $\delta$ in $\MB_G^{s}$, and so in $\mathcal{D}_G^{s,t}$.
By \cite[Lem.~6.2]{FatemehFarbod2} the graph obtained by contracting the vertices corresponding to any interior point of $(\pb,\rb)$ has  a unique source, and so the region $R$ containing $(\pb,\rb)$ belongs to $\mathcal{D}_G^{s,t}$.

\smallskip
Note that the set $\{y_e-y_{\bar{e}}:\ e,\bar{e}\in\EE(G)\}$ forms a regular sequence for $S/\C_{s,t}^\Oc$.
Then by \cite[Thm.~A.11]{FatemehFarbod2} (see also \cite[Lem.~3.15]{EisenbudSyz}) relabelling the vertices of the complex $\mathcal{D}_G^{s,t}$ with monomials $\xb^{\C_i}$ (instead of $\yb^{\C_i}$ corresponding to cuts $\C_i$), gives us a polyhedral complex supporting the minimal free resolution of $\C_{s,t}$. 
We extend the labeling to all faces by the least common multiple rule.
\end{proof}

\vspace{-2mm}

\subsection{Spanning tree ideals  $\Tf_G$ and $\Tf_G^s$}\label{sec:span}
Here we present the SMT reliability formula as stated in Theorem~\ref{intro:thm2}.
However, the  results  are written in the algebraic language of ideals, the proofs are based on graph theoretical arguments. 
We derive the implicit representation of the higher syzygy modules of these ideals in terms of the acyclic orientations of graph
which also extends the results in \cite{sat}, where Satyanarayana and Prabhakar have presented a topological formula for the  SMT reliability formula by giving a combinatorial recipe to read the (non-cancelling) terms. 
We remark that the ideals associated to spanning trees arise in different contexts, see e.g. \cite{novik, FatemehFarbod2,BerndMaria}.

\smallskip

The following special class  of acyclic partial orientations of $G$ arises naturally in our setting, where $g$ denotes the {\em genus} of the graph, i.e. $g=|E(G)|-|V(G)|+1$.
\begin{Definition}\label{def:span}
Fix a pointed graph $(G,s)$ with the (oriented) edge set $\EE(G)$. For each integer $0 \leq k \leq g$, an {\em oriented $k$-spanning tree}  \footnote{Gioan in \cite{Gioan} defines a $s$-connected orientation as an orientation 
in which every vertex is reachable from $s$ by a directed path. We use the notation $\Tf_k$ and the name oriented $k$-spanning tree 
in order to emphasize  that $\Tf_k$ is an acyclic subgraph of $G$ containing a rooted spanning tree with $k$ extra edges.
} $\Tc$ of $(G,s)$ is a connected subgraph of $G$ on $V(G)$  with a unique source at $s$ 
such that
\begin{itemize}
\item $\EE(\Tc)\subset \EE(G)$ with $|\EE(\Tc)|=n-1+k$,
\item $\Tc$ is acyclic.
\end{itemize}
The set of all oriented $k$-spanning trees of $(G,s)$ will be denoted by $\Sf_k(G, s)$.
The set $\Sf_0(G,s)$ corresponds to the set $\{\OO_T:\ T\ {\rm is\ a\ spanning\ tree\ of}\ G\}$.
\end{Definition}

\vspace{-2mm}
\noindent
\begin{Notation} Let $\Tf$ be an element of $\Sf_{k}(G, s)$ for $k>0$, and $e\in\EE(\Tf)$. 
\begin{itemize}
\item[(1)] $\Tf^{e}$ denotes the  subgraph of $\Tf$ with the edge set $\EE(\Tf^e)=\EE(\Tf)\backslash\{e\}$. We set
$
W(\Tf)=\{e\in \EE(\Tf):\ \Tf^e\in\Sf_{k-1}(G, s)\}.
$

\item[(2)] Given an arbitrary ordering on the edges of the graph, say $\EE(G)=\{e_1,\ldots,e_m\}$, we set
$
{c(\Tf,e)=(-1)^j},
$
when $e$ is the $j$-th edge among the edges with endpoint $e_+$.
\end{itemize}
\end{Notation}


\begin{Theorem}\label{thm:GB}
Fix a pointed graph $(G,s)$. For each $k \geq 0$ there exists a natural injection
\[\psi_k:\Sf_{k}(G, s)\hookrightarrow \syz_{k}(\Tf_G^s)\]
such that
 the set $\Image(\psi_k)$ forms a minimal generating set for $\syz_{k}(\Tf_G^s)$.
\end{Theorem}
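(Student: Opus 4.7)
The plan is to write $\psi_k$ by an explicit formula that mimics a simplicial boundary, and then to establish its three required properties — being a well-defined syzygy, being injective, and carving out a minimal generating set — by exploiting the fact that distinct oriented $k$-spanning trees have distinct multidegrees, a consequence of Theorem~\ref{thm:syz canonical divisor}.

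For $k = 0$, the map $\psi_0(T) = \yb^T$ is just the defining bijection between $\Sf_0(G,q)$ and the monomial generators of $\Tf_G^q$, so the content is in the case $k \geq 1$. Setting
\[
\psi_k(\Tc) = \sum_{e \in W(\Tc)} c(\Tc, e)\, y_e\, [\Tc^e] \in F_{k-1},
\]
where $[\Tc^e]$ is the basis element of $F_{k-1}$ indexed by $\Tc^e \in \Sf_{k-1}(G,q)$, I would first verify by induction on $k$ that $\varphi_{k-1}(\psi_k(\Tc)) = 0$. Applying $\psi_{k-1}$ term by term yields a double sum over ordered pairs $(e, e')$ with $e \in W(\Tc)$ and $e' \in W(\Tc^e)$; the head-ordered sign convention for $c(\Tc,e)$ is tailor-made so that the contributions of $(e,e')$ and $(e',e)$ cancel in pairs, reproducing the boundary-of-boundary identity.

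For injectivity and minimality, observe that every term $y_e [\Tc^e]$ in $\psi_k(\Tc)$ has multidegree $y_e \cdot \yb^{\Tc^e} = \yb^\Tc$, so $\psi_k(\Tc)$ is multihomogeneous of multidegree $\yb^\Tc$. Distinct elements $\Tc, \Tc' \in \Sf_k(G,q)$ have distinct edge sets and hence distinct labels $\yb^\Tc \ne \yb^{\Tc'}$, which gives injectivity at once. All coefficients $y_e$ lie in $\mm$, so $\Image(\psi_k) \subset \mm F_{k-1}$; once generation is known, minimality follows automatically from the standard criterion.

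The main obstacle is showing that $\Image(\psi_k)$ actually generates $\syz_k(\Tf_G^q)$. For this I would compute each multigraded Betti number $\beta_{k,\alpha}(\Tf_G^q)$ via Hochster's formula and show that $\beta_{k,\alpha} = 1$ exactly when $\alpha = \yb^\Tc$ for some $\Tc \in \Sf_k(G,q)$, and $\beta_{k,\alpha} = 0$ otherwise. For $\alpha = \yb^\Tc$, an acyclic matching driven by Dhar's burning algorithm would collapse the upper Koszul simplicial complex $K^\alpha$ onto a subcomplex whose facets are the $\Tc^e$ for $e \in W(\Tc)$, and whose reduced homology is one-dimensional and concentrated in degree $k-1$. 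For $\alpha$ not of this form, the divisor $D_\alpha$ fails to be $q$-reduced by Theorem~\ref{thm:syz canonical divisor}, and the burning algorithm instead supplies an acyclic matching that collapses $K^\alpha$ entirely, giving $\beta_{k,\alpha} = 0$. Combining this Betti-number count with the injectivity and the multidegree matching established above forces $\Image(\psi_k)$ to be a minimal generating set of $\syz_k(\Tf_G^q)$.
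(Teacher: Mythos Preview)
Your outline departs from the paper's argument. The paper never computes Betti numbers via Hochster; instead it uses the fact that the resolution of $\Tf_G^q$ is linear (Remark~\ref{rem:linear}(i), from Alexander duality with the Cohen--Macaulay ideal $\C_G^q$) to know that any minimal $k$-syzygy has the form $\sum_i \pm y_{e_i}[\Tf_i]$ with $\Tf_i\in\Sf_{k-1}(G,q)$, and then argues directly that the common multidegree $\EE(\Tf_i)\cup\{e_i\}$ is acyclic with unique source $q$, hence lies in $\Sf_k(G,q)$. Your Betti-counting strategy could in principle give an independent proof, but as written it has two real gaps.

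First, the appeal to Theorem~\ref{thm:syz canonical divisor} is in the wrong direction. That theorem says (a) $\Tc\in\Sf_k(G,q)\Rightarrow D_\Tc$ is $q$-reduced, and (b) every $q$-reduced divisor is $D_\Tc$ for \emph{some} $\Tc\in\Sf_k(G,q)$. You need the statement ``$\alpha\notin\Sf_k(G,q)\Rightarrow D_\alpha$ is not $q$-reduced'', i.e.\ that \emph{every} oriented edge set with $q$-reduced indegree divisor is an oriented $k$-spanning tree. This is not part of Theorem~\ref{thm:syz canonical divisor} and is not automatic: the relevant multidegrees $\alpha$ live in $\ZZ^{|\EE(G)|}$, may contain both $y_e$ and $y_{\bar e}$ (take two spanning trees orienting the same edge oppositely), and are not a priori partial orientations at all. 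You would have to prove separately that any squarefree $\alpha$ supporting a nonzero Betti number is an orientation and is acyclic with unique source; this is essentially the content of the paper's inductive argument.

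Second, the Hochster/Dhar step is only asserted. Even granting that $D_\alpha$ is $q$-reduced, you still have to show that the upper Koszul complex $K^\alpha(\Tf_G^q)$ has exactly one-dimensional reduced homology concentrated in the correct degree. Saying that burning collapses $K^\alpha$ onto ``a subcomplex whose facets are the $\Tc^e$ for $e\in W(\Tc)$'' is the entire difficulty; Dhar's algorithm runs on $V(G)$, while $K^\alpha$ is a complex on oriented edges, and producing a valid acyclic matching that realizes your claimed collapse requires a genuine construction, not a citation.
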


\begin{proof}
For $k=0$ the result is clear. Here $\psi_0(\Tf_G^s)$ is the generating set of $\Tf_G^s$, and
\[
\psi_{0}:\Sf_{0}(G, s)\hookrightarrow \syz_{0}(\Tf_G^s)=\Tf_G^s
\]
\[
\Tf \mapsto \prod_{e\in\EE(\Tf)} y_e\ .
\]
The proof is by induction on $k \geq 1$.
We define $\psi_k:\Sf_{k}(G, s)\hookrightarrow \syz_{k}(\Tf_G^s)$ by
\[
\Tf \mapsto \sum_{e\in W(\Tf)} (-1)^{c(\Tf,e)}y_e [\psi_{k-1}(\Tf^{e})]\ .
\]

{\bf Base case.}  Assume that $\Tf\in \Sf_{1}(G, s)$. Then  $\Tf$ has a unique cycle, and there exists a unique vertex $v$ such that indeg$(v)=2$. 
Therefore $W(\Tf)=\{e,e':\ e_+=e'_+=v\}$ and $c(\Tf,e)=-c(\Tf,e')$. Thus $\Tf^e, \Tf^{e'}$ belong to $\Tf_G^s$ and $y_e \psi_0(\Tf^e)- y_{e'} \psi_0(\Tf^{e'})=0$ which implies that
$
\psi_1(\Tf)=y_e [\Tf^e]- y_{e'}[\Tf^{e'}]
$
is an element of $\syz_1(\Tf_G^s)$.

Now assume that $\sum (-1)^{c(\Tf_i,e_i)} y_{e_i} [\Tf_i]$ is an element of the minimal generating set of $\syz_1(\Tf_G^s)$, that is,
$\sum (-1)^{c(\Tf_i,e_i)} y_{e_i}  \psi_0(\Tf_i)=0$. Note that here we use Remark~\ref{rem:CM}(ii) that the resolution of $\Tf_G^s$, as the Alexander dual of $\C_G^s$ is {\em linear}.
Therefore for each monomial $M_i=y_{e_i} \prod_{e\in \EE(\Tf_i)} y_{e}$ there exists a unique term $M_j=y_{e_j} \prod_{e\in \EE(\Tf_j)} y_{e}$ 
corresponding to an oriented tree $\Tf_j$ such that $M_i=M_j$. This implies that $\EE(\Tf_i)\cup \{e_i\}=\EE(\Tf_j)\cup \{e_j\}$. 
We claim that the induced graph $\Tf$ with the edges $\EE(\Tf)=\EE(\Tf_i)\cup \{e_i\}$ is acyclic.  Otherwise it should have a cycle $C$ including the edges $e_i$ and $e_j$. Let $v_i=e_{i_+}$ and $v_j=e_{j_+}$ with $v_i\neq v_j$ (the case $v_i=v_j$ is clear). On the other hand,
since $e_i\not\in \Tf_i$, there exists a (unique) path $P$ with $\EE(P)\subset \EE(\Tf_i)$ from $s$ to $v_i$ not going through the edge $e_i$. Similarly, there exists a (unique) path $P'$  with $\EE(P')\subset \EE(\Tf_j)$ from $s$ to $v_j$ 
not going through the edge $e_j$.  Thus the subgraph with the edge set $\EE(P)\cup \EE(P')\cup\EE(C)\backslash \{e_j\}\subseteq \EE(\Tf_j)$ contains a cycle which is a contradiction by our assumption that 
$\Tf_j\in \Sf_0(G)$ and it is acyclic.

\medskip

{\bf Induction hypothesis.} Now let $k>1$ and assume $\Image(\psi_{k-1})\subseteq \syz_{k-1}(\Tf_G^s)$
forms a minimal generating set for $\syz_{k-1}(\Tf_G^s)$.


Now assume that $\sb=\sum (-1)^{c(\Tf_i,e_i)} y_{e_i} [\Tf_i]$ is an element of the minimal generating set of $\syz_{k-1}(\Tf_G^s)$, that is,
$\sum (-1)^{c(\Tf_i,e_i)} y_{e_i}  \psi_{k-1}(\Tf_i)=0$ since by Remark~\ref{rem:CM}(ii) we know that the resolution of $\Tf_G^s$ is {\em linear}.
Note that corresponding to each term $M_{ij}=y_{e_i}  y_{e_j} \psi_{k-2}(\Tf_i^{e_j})$ with $e_j\in W(\Tf_i)$, there exists a unique term $y_{e_j}  \psi_{k-1}(\Tf_j)$ with $e_i\in W(\Tf_j)$ such that $M_{ji}=y_{e_j}  y_{e_i} \psi_{k-2}(\Tf_j^{e_i})$
is equal to $M_{ij}$. In particular, we have $\Tf_j\backslash\{e_i\}=\Tf_i\backslash\{e_j\}$. Set $\Tf$ be the  subgraph of $G$ with the edge set $\EE(\Tf_i)\cup\{e_i\}$.
Now we show that 

\begin{itemize}
\item[(1)] the subgraph $\Tf$ belongs to $\Sf_k(G,s)$. 
\item[(2)] all terms of $\psi_k(\Tf)$ have been appeared in $\sb=\sum (-1)^{c(\Tf_i,e_i)} y_{e_i} [\Tf_i]$.
\end{itemize}

(1): Set $v_i=e_{i_+}$ and $v_j=e_{j_+}$. If $v_i=v_j$ we are done. Assume that $v_i\neq v_j$. Assume by contradiction that $\Tf$ is not acyclic. 
Hence it contains an oriented cycle $C$ including the edges $e_i$ and $e_j$. 
Our assumption that $\Tf_i,\Tf_j\in \Sf_{k-1}(G,s)$ implies that 
there exists a path $P$ from $s$ to $v_i$ not going through the edge $e_i$, and a path $P'$ from $s$ to $v_j$ not going through the edge $e_j$. 

It is enough to show that there exists an edge $e_\ell\in W(\Tf_i)\backslash \EE(C)$. Since the only possible term cancelling 
 $M_{i,\ell}=y_{e_i}  y_{e_\ell} \psi_{k-2}(\Tf_i^{e_\ell})$ is the term $M_{\ell,i}$ coming from $\Tf_\ell$. However $\Tf_\ell$ contains $e_i$ and the oriented cycle $C$ which implies that $\Tf_\ell$ does not belong to $\Sf_{k-1}$. 
 Let $P'$ be the path in $\Tf_i$ from $s$ to $v_j$, and let $e'$ be the edge in $P'$ with $e'_+=v_j$. Then $e'\in W(\Tf_i)$ and  $e'\not\in\EE(C)$, as desired. 

(2): Assume that $e\in W(\Tf)$. Then $\Tf^{e}$ is acyclic, and so $(\Tf^{e})^{e_i}$ is also acyclic. If $e_i\in W(\Tf^{e})$, then we are done. Assume that $e_i\not\in W(\Tf^{e})$. 
This implies that $e_+=e_{i_+}$ and there exists no other such edge. Note that in this case $e\in W(\Tf_j)$. 
Therefore in order to cancel the term corresponding to $(\Tf^{e_j})^{e}$ the term corresponding to $\Tf^e$ should be in $\sb$, which completes the proof.
\end{proof}

\begin{Remark}\label{rmk:inithm}
The resolution of the ideal $\Tf_G$ can be obtained from the resolution  of the ideal $\Tf_G^s$, by replacing all the variables $y_e$ with $x_e$, i.e., by forgetting the orientation on $e$, since
$\{y_e-y_{\bar{e}}:\ e,\bar{e}\in \EE(G)\}$ forms a regular sequence for 
$\Tf_G^s$
by Proposition~\ref{lem:sameBetti}(i).
\end{Remark}

\begin{Corollary}\label{cor:reg}
The projective dimension of 
$\Tf_G$ and the Castelnuovo-Mumford regularity of  
$\C_G$  are equal to the genus of $G$. The minimal prime decomposition of $\Tf_G=\bigcap_{\C} P_\C$ is given in terms of the minimal cuts of graph, 
where $P_\C=\langle x_i:i\in E(\C) \rangle$.
\end{Corollary}

\begin{Corollary}\label{cor:bet}
The Betti numbers of  $\Tf_G$ is independent of the characteristic of the  field.  For all $i \geq 0$,
$
\beta_{i,n-1+i}(\Tf_G)=\beta_{i}(\Tf_G)=|\Sf_{i}(G, s)|.
$
Moreover for each $\js\in \mathbb{Z}^n$, 
$\beta_{i,\js}(\Tf_G)=|\Sf_{i,\js}(G, s)|,$
where  $|\Sf_i(G, s)|$ is the number of $i$-spanning trees of $G$ and $|\Sf_{i,\js}(G, s)|$ is the number of  acyclic orientations on the induced subgraph on edges corresponding to $\js$.
\end{Corollary}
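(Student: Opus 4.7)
The plan is to deduce the corollary directly from Theorem~\ref{thm:GB} together with the dualities already proved earlier in the paper. First I would use Theorem~\ref{thm:GB}, which exhibits an explicit injection $\psi_k\colon \Sf_k(G,q)\hookrightarrow \syz_k(\Tf_G^q)$ whose image is a minimal generating set, to conclude $\beta_k(\Tf_G^q)=|\Sf_k(G,q)|$. Since the maps $\psi_k$ are combinatorially defined with coefficients $\pm 1$, the resulting complex lifts to $\ZZ$, so the ranks of the free modules in the minimal resolution do not depend on $\chara(k)$. To transfer this to $\Tf_G$, I would apply Proposition~\ref{lem:sameBetti}(ii) to the oriented matroid ideal $\C_G^q$: the multigraded Betti numbers of its Alexander dual $\Tf_G^q$ agree with those of $\bar{\C}_G^q$'s Alexander dual $\Tf_G$. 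This simultaneously gives $\beta_i(\Tf_G)=\beta_i(\Tf_G^q)=|\Sf_i(G,q)|$ and characteristic-independence for $\Tf_G$.

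For the linear-degree refinement, I would invoke Remark~\ref{rem:linear}(i): the minimal free resolution of $\Tf_G^q$ (and hence of $\Tf_G$) is linear. Since each minimal generator of $\Tf_G$ is the squarefree monomial of a spanning tree and therefore sits in degree $n-1$, linearity forces every minimal generator of $\syz_i(\Tf_G)$ to live in total degree $n-1+i$, yielding $\beta_{i,n-1+i}(\Tf_G)=\beta_i(\Tf_G)$.

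The only substantive new point is the multigraded count. On the $\yb$-side, each $\Tc\in\Sf_i(G,q)$ is uniquely determined by its oriented edge set $\EE(\Tc)\subset\EE(G)$, so the multigraded Betti number of $\Tf_G^q$ at the multidegree associated to $\Tc$ is exactly $1$ (and $0$ elsewhere). Passing to $\Tf_G$ via the regular sequence $\{y_e-y_{\bar e}\}$ from Proposition~\ref{lem:sameBetti}(i) identifies $y_e$ and $y_{\bar e}$ with $x_e$, collapsing the multidegree of $\Tc$ to the indicator vector of the underlying edge set $E(\Tc)$. Therefore $\beta_{i,\js}(\Tf_G)$ counts the $\Tc\in\Sf_i(G,q)$ with $E(\Tc)=\supp(\js)$, which by Definition~\ref{def:span} are exactly the acyclic orientations of the induced subgraph $G[\supp(\js)]$ having a unique source at $q$; this is $|\Sf_{i,\js}(G,q)|$.

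The main obstacle I anticipate is the multigraded bookkeeping in the last step: one must check that distinct oriented $i$-spanning trees really produce distinct multidegrees in $\Tf_G^q$ (so that collapsing $y_e\leftrightarrow y_{\bar e}\leftrightarrow x_e$ sums multiplicities without unexpected cancellations), and verify that the orientation on $G[\supp(\js)]$ recovered after collapse is precisely one in $\Sf_{i,\js}(G,q)$. Both facts follow cleanly from the definition of $\Sf_i(G,q)$ and the acyclicity/unique-source condition, but this is the one place in the argument that goes beyond formal manipulations with Betti tables.
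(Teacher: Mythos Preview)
Your proposal is correct and follows the same route the paper takes: the paper states this corollary without proof, treating it as an immediate consequence of Theorem~\ref{thm:GB} together with Remark~\ref{rmk:inithm} (which in turn rests on Proposition~\ref{lem:sameBetti}), and your write-up simply fleshes out those implicit steps. One small simplification: for characteristic independence you do not need the ``lifts to $\ZZ$'' detour---since the proof of Theorem~\ref{thm:GB} is purely graph-theoretic and valid over any field, the identity $\beta_i(\Tf_G^q)=|\Sf_i(G,q)|$ already holds for every $k$, and the right-hand side is manifestly field-independent.
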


\begin{Example}\label{exam:cactus}
Let $G$ be a cactus, i.e., a connected graph in which each edge belongs to at most one cycle. Assume that the induced cycles of $G$ are $C_1,\ldots,C_k$ with $|V(C_i)|=n_i$ for $i=1,\ldots,k$. 
Then one can easily see, by induction on $k$, that
\[
|\Sf_{i,j}(G, s)|=\sum_{D=\{\ell_1,\ell_2,\ldots,\ell_i\}\atop
1\leq \ell_1<\ell_2<\cdots<\ell_i\leq k}  (n_{\ell_1}-1)(n_{\ell_2}-1)\cdots (n_{\ell_i}-1)(\prod_{a\in [k]\backslash D}\ n_a)\ .
\]
In particular if $n_i=n$ for all $i$, then $
|\Sf_{i,j}(G, s)|=\beta_{i,j}=
{k\choose i-1} (n-1)^{i-1} n^{k-i+1}.
$
\end{Example}



\begin{Example}\label{exam:1}
For the graph $(G,s)$ depicted in Figure~\ref{fig:spanning trees1} with the fixed orientation $\Oc$,
\[
\Tf_G^s=\langle  
 y_{\bar{1}} y_{2}y_{\bar{3}},  y_{2} y_{\bar{3}} y_{5}, y_{1} y_{\bar{3}}y_{5},  y_{\bar{1}} y_{2}y_{4}, 
 y_{2} y_{4}y_{5},  y_{1} y_{4} y_{5}, y_{2} y_{3}y_{4},  y_{1} y_{3}y_{4}\rangle.
\]
Since $|\Sf_0(G,s)|=8$, $|\Sf_1(G,s)|=11$, $|\Sf_2(G,s)|=4$, we have
$
\beta_{0,3}=8,\ \beta_{1,4}=11,\ \beta_{2,5}=4.
$


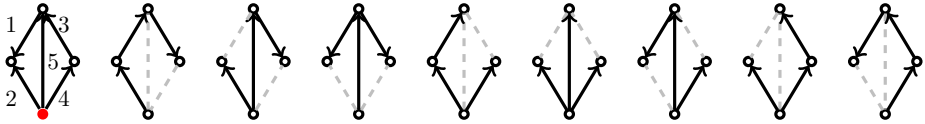
\begin{figure}[h!]

\begin{center}

\begin{tikzpicture} [scale = .14, very thick = 10mm]

    \node (n4) at (-6,1)  [Cred] {};  \node (m4) at (-5,6)  [Cwhite] {$5$};
  \node (n1) at (-6,11) [Cgray] {};   \node (m1) at (-4,9.5) [Cwhite] {$3$};
  \node (n2) at (-9,6)  [Cgray] {};  \node (m1) at (-9,9.5) [Cwhite] {$1$};
  \node (n3) at (-3,6)  [Cgray] {};  \node (m1) at (-4,2.5) [Cwhite] {$4$}; \node (m1) at (-9,2.5) [Cwhite] {$2$};
 \foreach \from/\to in {n4/n2,n3/n1,n1/n2,n4/n3,n4/n1}
  \draw[black][->] (\from) -- (\to);

  \node (n4) at (4,1)  [Cgray] {};
  \node (n1) at (4,11) [Cgray] {};
  \node (n2) at (1,6)  [Cgray] {};
  \node (n3) at (7,6)  [Cgray] {};

\foreach \from/\to in {n2/n1,n1/n3,n4/n2}
    \draw[][->] (\from) -- (\to);
 
\foreach \from/\to in {n4/n1,n3/n4}
    \draw[lightgray][dashed] (\from) -- (\to);

    \node (n4) at (14,1)  [Cgray] {};
  \node (n1) at (14,11) [Cgray] {};
  \node (n2) at (11,6)  [Cgray] {};
  \node (n3) at (17,6)  [Cgray] {};
  \foreach \from/\to in {n4/n2,n1/n3,n4/n1}
   \draw[][->] (\from) -- (\to);

     \foreach \from/\to in {n1/n2,n4/n3}
    \draw[lightgray][dashed] (\from) -- (\to);

    \node (n4) at (24,1)  [Cgray] {};
  \node (n1) at (24,11) [Cgray] {};
  \node (n2) at (21,6)  [Cgray] {};
  \node (n3) at (27,6)  [Cgray] {};
  \foreach \from/\to in {n4/n2,n4/n3}
    \draw[lightgray][dashed] (\from) -- (\to);

    \foreach \from/\to in {n4/n1,n1/n3,n1/n2}
    \draw[][->] (\from) -- (\to);

  \node (n4) at (34,1)  [Cgray] {};
  \node (n1) at (34,11) [Cgray] {};
  \node (n2) at (31,6)  [Cgray] {};
  \node (n3) at (37,6)  [Cgray] {};
   \foreach \from/\to in {n4/n1,n1/n3}
    \draw[lightgray][dashed] (\from) -- (\to);

    \foreach \from/\to in {n2/n1,n4/n2,n4/n3}
    \draw[][->] (\from) -- (\to);

    \node (n4) at (44,1)  [Cgray] {};
  \node (n1) at (44,11) [Cgray] {};
  \node (n2) at (41,6)  [Cgray] {};
  \node (n3) at (47,6)  [Cgray] {};
 \foreach \from/\to in {n1/n2,n3/n1}
    \draw[lightgray][dashed] (\from) -- (\to);

    \foreach \from/\to in {n4/n2,n4/n3,n4/n1}
    \draw[][->] (\from) -- (\to);

    \node (n4) at (54,1)  [Cgray] {};
  \node (n1) at (54,11) [Cgray] {};
  \node (n2) at (51,6)  [Cgray] {};
  \node (n3) at (57,6)  [Cgray] {};
 \foreach \from/\to in {n1/n3,n4/n2}
    \draw[lightgray][dashed] (\from) -- (\to);

    \foreach \from/\to in {n4/n3,n4/n1,n1/n2}
    \draw[][->] (\from) -- (\to);


   \node (n4) at (64,1)  [Cgray] {};
  \node (n1) at (64,11) [Cgray] {};
  \node (n2) at (61,6)  [Cgray] {};
  \node (n3) at (67,6)  [Cgray] {};
 \foreach \from/\to in {n1/n2,n4/n1}
    \draw[lightgray][dashed] (\from) -- (\to);

    \foreach \from/\to in {n4/n3,n4/n2,n3/n1}
    \draw[][->] (\from) -- (\to);


 \node (n4) at (74,1)  [Cgray] {};
  \node (n1) at (74,11) [Cgray] {};
  \node (n2) at (71,6)  [Cgray] {};
  \node (n3) at (77,6)  [Cgray] {};
 \foreach \from/\to in {n4/n2,n1/n4}
    \draw[lightgray][dashed] (\from) -- (\to);

    \foreach \from/\to in {n4/n3,n3/n1,n1/n2}
    \draw[][->] (\from) -- (\to);

\end{tikzpicture}

\caption{Graph $G$ with a fixed orientation, and its oriented spanning trees}
\label{fig:spanning trees1}
\end{center}
\end{figure}

\end{Example}

\vspace{-7mm}

\subsection{Reduced divisors and oriented $k$-spanning trees}
\label{subsec:dhar}
Using {\em Dhar's burning algorithm}, one can start from a source $s$, and check the availability of a vertex $v_1\in V(G)\backslash \{s\}$ with  $D(v_1)<|\EE({\{s\}},\{v_1\})|$. 
If there exists such a vertex, then the next step will check the same procedure to find a vertex $v_2$ with $D(v_2)<|\EE({\{s,v_1\}},\{v_2\})|$. The divisor is $s$-reduced if and only if by continuing this procedure we 
can enlarge the set containing $s$ to $V(G)$, (see \cite[Alg. 4]{FarbodMatt12}). We may assume that $D(s)=-1$  for all $s$-reduced divisors.

\medskip
The oriented $k$-spanning trees are naturally arisen in the theory of (reduced) divisors. 

\begin{Theorem}\label{thm:syz canonical divisor}
For each $\Tc\in \Sf_k(G, s)$, its corresponding divisor $D_\Tc$ is $s$-reduced. Associated to every $s$-reduced divisor $D$, there exists
 $\Tc\in \Sf_k(G, s)$ 
with $D_\Tc\sim D$. 
\end{Theorem}

\begin{proof}
Let $\Tc$ be an oriented $k$-spanning tree. We show that $D_{\Tc}$ is $s$-reduced. %
Note that $V(G)\backslash \{s\}$ has at least one vertex, say $v_1$, with $\indeg_{V(G)\backslash \{s\}}(v_1)=0$, 
otherwise we have an oriented cycle in $G$. Thus  $\indeg_{\Tc}(v_1)=|\EE(\{v_1\},\{s\})|=|E(\{v_1\},\{s\})|$ and $|\EE(\{s,v_1\},  \{s,v_1\}^c)|=0$. Thus $\indeg(v_1)-1=D(v_1)<|E(\{s\},\{v_1\})|$. 
By continuing the same procedure we can enlarge $\{s,v_1\}$ to $V(G)$, as needed in Dhar's algorithm which implies that $D_{\Tc}$ is $s$-reduced. We also know that $D_\Tc(s)=-1$.

\smallskip

Let $D$ be a $s$-reduced divisor of degree $k-1$ (with $D(s)=-1$). First we find a partial orientation $\P$ with $D_\P=D$. We may assume $D(v)<\deg(v)$ for each $v$. 
Let $\P$ be an arbitrary orientation with $D_\P<D$. Assume that $\indeg_\P(v)<D(v)$. If there exists an unoriented edge adjacent to $v$, then we orient this edge toward $v$ to increase the indegree of $v$, and get closer to the desired orientation. 
Otherwise 
$|\EE(\{v\}^c,\{v\})\cap \P|>0$. So by applying Lemma~\ref{lem:cut} we obtain a cut $\C=\EE(C,C^c)$ with $v\in C$. Now by inverting this cut, we increase the indegree of $v$ and the obtained partial orientation is closer to $D$. 
Note that by the same argument used in proof 
Lemma~\ref{lem:cut}, in case that the indegree of $v$ is greater than $D(v)$, we can use an unoriented edge in $\P$, and keep exchanging pair of edges as in Remark~\ref{rem:un/indirect} to obtain an unoriented edge adjacent to $v$. So that we can {\em unorient} 
this edge to decrease the indegree of $v$. By continuing the same argument, we keep decreasing the number $D(v)-D_\P(v)$ for each vertex $v$ and we get the desired orientation.

Now we want to show that $\P\in \Sf_k(G, s)$. Our assumption that $D(s)=-1$ implies that $s$ is a source. We may assume that $\P={\P_{\{s\}}}$ as in Definition~\ref{def:prec1}. 
The idea is to start form $s$, and add all other vertices of $G$ to $\{s\}$, step-by-step, by applying  Dhar's algorithm so that all oriented edges  are directed from the set containing $s$ to its complement. 
Since $D$ is $s$-reduced, there exists $v_1\in V(G)\backslash\{s\}$ such that no edge is directed to $v_1$ in $V(G)\backslash\{s\}$. Therefore, $|\EE(\{s,v_1\}^c, \{s,v_1\})|=0$. If  $\EE(\{s,v_1\},\{s,v_1\}^c)\subset \P$, then
$\EE(\{s,v_1\},\{s,v_1\}^c)$ is a cut, and we perform a cut-inverse to increase the number of vertices with the property that there exists a path from $s$ to them. Note that inverting cuts will never produce a cycle.
Otherwise, $\EE(\{s,v_1\},\{s,v_1\}^c)\backslash \P$ contains at least an edge $e$. Our assumption on $\P_{\{s\}}$ implies that none of the vertices of $V(G)\backslash \{s,v_1\}$ is oriented to $v_2=e_-$. 
Then we use the same argument for $\{s,v_1,v_2\}$, and by continuing the same procedure, we keep enlarging the set containing $s$ to $V(G)$. 
\end{proof}

\vspace{-2mm}

\begin{Remark}
Note that in the proof of Theorem~\ref{thm:syz canonical divisor} corresponding to each divisor $D$ with $0\leq\deg(D)<g$, we have found a partial orientation $\P$ with $D\leq D_{\P}$. We are mostly interested in having the {\em equivalency} $D\sim D_{\P}$. We have  shown that if $D$ is $s$-reduced, then $\P$ is indeed an oriented
$\deg(D)$-spanning tree with $D\sim D_{\P}$.
It is also clear that $\deg(D)=\deg(D_\P)$ implies the {\em equality} $D=D_\P$. In particular the equality holds for $\deg(D)=g-1$,  since $\deg(D_\P)\leq g-1$ (see  \cite[Thm.~4.7]{ABKS} for the same statement).
\end{Remark}
\begin{Corollary}\label{cor:syz-div}
 For a  $s$-reduced divisor $D$ of degree $k-1$ with $D(s)=-1$, there exists $\Tf\in \Sf_k(G, q)$ and a $k$-syzygy element $\psi_k(\Tf)$ of $\Tf_G$ such that $D=D_{\Tf}$.
\end{Corollary}


\begin{Remark}{\rm
The  reduced divisors played a prominent role in Baker-Norine's proof of Riemann-Roch theory for finite graphs. 
While this paper was being prepared, the preprint \cite{Spencer}
 was posted on the arXiv by Spencer Backman who applies similar results 
to provide a new proof of the Riemann-Roch theorem. 
However our perspective 
 is mostly geometric
combinatorics and commutative algebra. 
There are several {\em other} bijections in the literature, see e.g.,  \cite{BensonTetali08}, between the maximum $G$-parking functions, 
the set of spanning trees with no broken circuit and particular acyclic orientations of $G$.
}
\end{Remark}

\subsection{Reliability of the dual systems}\label{sec:dual} 
For a system $S$, its dual is defined such that a path set of $S$ is a cut set of its dual. In our setting, the ideal $\C_G$ (respectively  $\C_{s,t} $) is the Alexander dual of the ideal $\Tf_G$ (respectively $\P_{s,t}$), see Proposition~\ref{prop:primary}. 
Hence by Alexander inversion formula \cite[Thm.~5.14]{MillerSturmfels}
\begin{eqnarray*}\label{rem:dual}
\mathcal{R}_{\C_{s,t}}(\xb) =1-\mathcal{R}_{\P_{s,t}}(1-\xb)\quad\text{and}\quad \mathcal{R}_{\C_G}(\xb) =1-\mathcal{R}_{\Tf_G}(1-\xb)\ .
\end{eqnarray*}
This connection enables us to obtain many numerical and intrinsic information about a network by looking instead at its dual arising  in a different setting. 

\smallskip
\noindent
\textbf{Acknowledgments.}{
The author is very grateful to Bernd Sturmfels and Volkmar Welker for many helpful conversations, and she would like to thank Lionel Levine,  Dinh Le Van and  Raman Sanyal for their comments on the first draft. She also thanks   
Eduardo S{\'a}enz-de-Cabez{\'o}n and Henry Wynn for introducing her to  system reliability theory.
The author was supported by the Alexander von Humboldt Foundation.
}

\bibliographystyle{alpha}
\bibliography{Betti2015}

\end{document}